\documentclass[12pt,reqno]{amsart}
\usepackage[a4paper,margin=1in]{geometry}
\usepackage[T1]{fontenc}
\usepackage[utf8]{inputenc}
\usepackage{lmodern}
\usepackage{microtype}
\usepackage{amsmath,amssymb,amsthm,mathtools}
\usepackage{enumitem}
\usepackage{color}
\usepackage[colorlinks=true,linkcolor=blue,citecolor=blue,urlcolor=blue]{hyperref}

\numberwithin{equation}{section}

\newtheorem{theorem}{Theorem}[section]
\newtheorem{proposition}[theorem]{Proposition}
\newtheorem{lemma}[theorem]{Lemma}

\theoremstyle{definition}

\theoremstyle{remark}

\DeclareMathOperator{\argmin}{argmin}

\DeclareMathOperator{\dist}{dist}
\DeclareMathOperator{\spec}{Spec}
\DeclareMathOperator{\tr}{tr}

\newcommand{\R}{\mathbb R}
\newcommand{\C}{\mathbb C}
\newcommand{\T}{\mathbb T}

\newcommand{\calD}{\mathcal D}
\newcommand{\calG}{\mathcal G}
\newcommand{\calM}{\mathcal M}
\newcommand{\calN}{\mathcal N}
\newcommand{\calO}{\mathcal O}

\newcommand{\calV}{\mathcal V}
\newcommand{\calX}{\mathcal X}
\newcommand{\GammaStar}{\Gamma^*}
\newcommand{\OmegaCell}{\mathbb{R}^d/\Gamma}

\title[Generic isolation and nondegeneracy]
{Band edges of periodic Schr\"odinger operators are generically isolated and nondegenerate}

\author{Zhongkai Tao}
\address[Zhongkai Tao]{Institut des Hautes \'Etudes Scientifiques, 35 route de Chartres, 91440 Bures-sur-Yvette, France}
\email{ztao@ihes.fr}

\author{Mengxuan Yang}
\address[Mengxuan Yang]{Department of Mathematics, Texas A\&M University, College Station, TX 77843, USA}
\email{yangmx@tamu.edu}
\date{}

\begin{document}

\begin{abstract}
For periodic Schr\"odinger operators $H_V=-\Delta+V$ with bounded real-valued potentials on $\mathbb R^d$ with $d\ge2$, we show that for generic potentials, each endpoint of every spectral gap is attained by a single Bloch band at only finitely many quasimomenta, and has a nondegenerate Hessian at every attaining point. This proves the Spectral Edge Conjecture for periodic Schr\"odinger operators.
\end{abstract}
\maketitle

\section{Introduction}
\label{sec:intro}

Let $\Gamma\subset\R^d$, $d\geq2$, be a lattice of full rank, with the dual lattice and the Brillouin zone given by
\begin{equation}
\label{eq:lattices}
  \GammaStar
  =\{\gamma^*\in\R^d:\gamma^*\cdot\gamma\in2\pi\mathbb Z
      \text{ for all }\gamma\in\Gamma\},
  \qquad
  \T_*^d=\R^d/\GammaStar.
\end{equation}
We consider Schr\"odinger operators $H_V=-\Delta+V: L^2(\R^d) \to L^2(\R^d)$ with bounded real-valued $\Gamma$-periodic potentials in the space
\begin{equation}
\label{eq:potential-space}
  \calX=L^\infty(\R^d/\Gamma;\R).
\end{equation}  
For $V\in\calX$, Floquet--Bloch theory gives the Floquet--Bloch transformed operators
\begin{equation}
\label{eq:fiber-operator}
  H_V(k)=(-i\nabla+k)^2+V: \  L^2_{}(\OmegaCell) \to L^2_{}(\OmegaCell)
  \qquad k\in\T_*^d,
\end{equation}
with domain $H^2_{}(\OmegaCell)$ and eigenvalues (counted with multiplicity)
\begin{equation}
\label{eq:ordered-eigenvalues}
  E_1(V,k)\leq E_2(V,k)\leq\cdots, \qquad k\in\T_*^d.
\end{equation}
Then by Floquet--Bloch theory, we have the decomposition of the continuous spectrum
\begin{equation}
\label{eq:floquet-spectrum}
  \spec H_V=\bigcup_{n\geq1}E_n(V,\T_*^d).
\end{equation}
A subset of $\calX$ is called \emph{residual} if it contains a countable intersection of open dense subsets of $\calX$. The endpoints $a,b\in\spec H_V$ of a bounded connected component $(a,b)$ of $\mathbb R\setminus\spec H_V$ are called \emph{band edges} (or \emph{spectral edges}), and the interval $(a,b)$ is called a \emph{spectral gap}.\footnote{The bottom of the spectrum is well understood in the literature (cf.~\cite{KirschSimon}), thus we exclude it from the definition of the band edges.} 
Our main result is the following
\begin{theorem}
\label{thm:main}
There exists a residual set $\calG\subset\calX$ such that the following holds. For every $V\in\calG$ and spectral gap $(a,b) \subset \R\setminus\spec H_V$, 
\begin{enumerate}[label=\textup{(\roman*)},leftmargin=3em]
  \item there exist unique indices $n_-,n_+\geq1$ such that
  \[
    a\in E_{n_-}(V,\T_*^d),
    \qquad
    b\in E_{n_+}(V,\T_*^d);
  \]
  for every $k$ with $E_{n_-}(V,k)=a$, the number $a$ is a simple eigenvalue of $H_V(k)$, and for every $k$ with $E_{n_+}(V,k)=b$, the number $b$ is a simple eigenvalue of $H_V(k)$;
  \item the sets
  \begin{equation}
      \label{eq:extrema}
          \{k:E_{n_-}(V,k)=a\}, \qquad \{k:E_{n_+}(V,k)=b\}
  \end{equation}
  are finite;
  \item for every $k_-$ in the first set and $k_+$ in the second set in \eqref{eq:extrema}, we have
  \[
    d_k^2E_{n_-}(V,k_-)<0, \qquad d_k^2E_{n_+}(V,k_+)>0.
  \]
\end{enumerate}
\end{theorem}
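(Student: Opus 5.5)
We will prove Theorem~\ref{thm:main} by a transversality (parametric) argument with a Baire-category reduction. For $N\in\mathbb N$, let $\calG_N$ be the set of $V\in\calX$ with $\|V\|_\infty<N$ such that every band edge in $(-\infty,N]$ satisfies (i)--(iii). We will show that each $\calG_N$ contains an open dense subset of $\{\|V\|_\infty<N\}$, and then intersect over $N$.

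\emph{Step 1 (openness and localization).} Fix $V_0$ satisfying (i)--(iii) below energy $N$. There are finitely many bands below $N$ and finitely many extremal points, each simple and nondegenerate. Analytic perturbation theory, with $V\mapsto H_V(k)$ bounded-analytic in $V$ uniformly in $k$, shows that nearby $V$ still have a simple eigenvalue near each extremal point. Its Hessian remains definite, so by the implicit function theorem the extremal point persists, unique and nondegenerate, in a small ball. Away from these balls, the band stays a uniform distance $\delta$ away from the edge value. Hence the conditions are open. The case where edges at different extremal points of the same band have different values is harmless, since we only need the global max or min.

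\emph{Step 2 (density: simplicity).} Degeneracies of $E_n(V,\cdot)$ at a band edge are removed by a codimension count. By the Lie/Colin de Verdière-type argument, the set $\{(k,V): E \text{ has multiplicity}\ge2\}$ has real codimension $3$ in the $(k,V)$ parameter space once the map $V\mapsto \langle u_i,Vu_j\rangle$ on the eigenspace is shown to be surjective onto Hermitian traceless $2\times2$ matrices. Surjectivity follows from unique continuation: products $u_i\bar u_j$ of Bloch functions are linearly independent as functions on the torus, and we test against finitely many smooth potentials. This codimension ($3$) does not by itself exceed $d$, so we do not argue that the crossings disappear. Instead we argue that at a crossing point the top band is not a strict local extremum generically. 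Near a conical point, the maximal eigenvalue of $E_0+\langle\ell,k\rangle+A(k)$ takes the form $\ell\cdot k+|Bk|$, and this has an extremum only if $|\ell|\le\|B\|$-type inequalities hold in all directions. Adding a generic perturbation breaks this, since the linear term $\langle u,\partial_k H\, u\rangle$ varies freely. I expect this step to be the \textbf{main obstacle}: one must show the extremum at a degenerate point is avoidable without controlling the full multiplicity structure. As a first attempt, I would instead use a finite-dimensional family $V+\sum_j t_jW_j$ and apply Sard/parametric transversality to the whole map $(k,t)\mapsto$ (eigenvalue data), concluding that for a.e.\ $t$ all edges are simple.

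\emph{Step 3 (density: nondegeneracy and finiteness).} Once the edge is simple, $E_n$ is real-analytic near the extremal set. Consider the map $\Phi(k,t)=\nabla_k E_n(V+\sum t_jW_j,k)$, which takes values in $\R^d$. Its $t$-derivative $\partial_{t_j}\nabla_k E_n=\nabla_k\langle u_k,W_ju_k\rangle$ is surjective for suitable $W_j$, again by independence of $\nabla_k|u_k|^2$ via unique continuation. Parametric transversality then gives, for generic $t$, that $0$ is a regular value of $\nabla_kE_n$. Every critical point is therefore nondegenerate and isolated, hence finite by compactness of $\T_*^d$. A definite Hessian at the global extremum follows since a nondegenerate maximum must be negative definite.

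Uniqueness of $n_\pm$ in (i) follows from simplicity. If two bands attained $a$, they would attain it at different $k$; forcing distinct values is one further codimension-$1$ condition, achieved by a perturbation that shifts one band relative to the other.

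Since each of Steps 1--3 yields an open dense subset of $\{\|V\|_\infty<N\}$, the intersection over $N$ is residual and gives the set $\calG$ of Theorem~\ref{thm:main}.
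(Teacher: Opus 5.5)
\textbf{Main gap: Step 3 fails at time-reversal invariant momenta.} Your Step 3 rests on the claim that $q\mapsto\nabla_kF_q(k)=\nabla_k\int q\,|u_k|^2\,dx$ is onto $\R^d$ at the relevant critical points. Equivalently, you need $\rho_\eta=(\eta\cdot\partial_k)|u_k|^2\not\equiv0$ for every $\eta\neq0$. This is false, and it fails systematically at time-reversal invariant momenta.

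Since $V$ is real, $\overline{H_V(k)u}=H_V(-k)\bar u$. Also $H_V(k+\gamma^*)=e^{-i\gamma^*\cdot x}H_V(k)e^{i\gamma^*\cdot x}$ for $\gamma^*\in\GammaStar$. Hence if $2k_0\in\GammaStar$ and the eigenvalue is simple, then $|u(k_0+\eta,\cdot)|^2=|u(k_0-\eta,\cdot)|^2$. So $\rho_\eta\equiv0$ for all $\eta$, and $\partial_t\nabla_kE(V+tq,k_0)=0$ for all $q$. Such $k_0$ are critical points of every simple band for every $V$, and band extrema frequently sit there.

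At $(k_0,t)$ your map $\Phi$ vanishes identically in $t$ and has zero $t$-derivative. So surjectivity of $D_{(k,t)}\Phi$ there is equivalent to nondegeneracy of $d_k^2E(k_0)$, which is exactly what you want to prove. Parametric transversality therefore yields nothing at these points. No unique-continuation argument can give linear independence of the $\partial_{k_j}|u_k|^2$, because it is false.

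\textbf{What the paper does instead.} The missing idea is a mechanism for the case where first-order variations of the gradient vanish along the Hessian kernel. The paper works only at a global minimum, sets $K=\ker d_k^2E(k_0)$, eliminates $Y=K^\perp$ by the implicit function theorem, and splits into two cases.
\begin{enumerate}
\item If $\rho_\eta\not\equiv0$ for some $\eta\in K$, a one-direction Sard argument lowers the Hessian corank. This is similar in spirit to yours, but runs only along $K$ and only at critical points with $d_k^2E\geq0$.
\item If $\rho_\eta\equiv0$ on $K$, the paper perturbs the reduced Hessian itself, $\partial_t|_{t=0}d_z^2g_t(0)=\mathcal R(q)$. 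It then shows by convex separation that $\mathcal R(q)>0$ on $K$ for some $q$.
\end{enumerate}
The key input is Lemma~\ref{lem:weighted-phase}: for $\eta\in\ker d_k^2E(k_0)$, $\rho_\eta\equiv0$ together with $(\eta\cdot\partial_k)u=0$ on the nodal set of $u$ forces $\eta=0$. It is proved via the current identity and unique continuation, and it is the weaker statement that unique continuation actually delivers.

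Minimality is used essentially: $H_Y>0$, and the cubic term vanishes on $K$. Induction on the corank then finishes the argument. Aiming for a full Morse property near the extremal set, as you do, asks for more than this method gives.

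\textbf{Secondary points.}
\begin{itemize}
\item Your Step 2 is not a proof, as you acknowledge. The paper simply cites Klopp--Ralston for (i), and you could do the same.
\item Your sets $\calG_N$ are not clearly open. Perturbing $V$ can open a new gap between touching bands, or move an edge across $N$, and the new edges need not satisfy (iii). The paper avoids this with $\calO_I=\calN_I\cup(\calX\setminus\overline{\calV_I})$ taken over rational intervals $I$.
\end{itemize}
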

The theorem proves the Spectral Edge Conjecture (cf.~\cite[Conjecture~8.5]{Kuchment2023}) for periodic Schr\"odinger operators, which predicts that, for generic potentials, any band edge belongs to a single band function $E_n(k)$, is attained at isolated quasimomenta $k$, and has a nondegenerate Hessian there. The structure of band edges of periodic Schr\"odinger operators is a fundamental open question in mathematical physics. Filonov--Kachkovskiy \cite{FilonovKachkovskiy} proved that global band extrema are isolated for a broad class of periodic elliptic operators when $d=2$. However, their argument does not establish nondegeneracy of the Hessian and is specific to dimension two. The generic nondegeneracy of spectral edges has remained open even for periodic Schrödinger operators. Theorem~\ref{thm:main} establishes both isolation and Hessian nondegeneracy for generic potentials in every dimension $d\geq 2$. 

The Spectral Edge Conjecture has several important consequences. It enters the definition of effective masses in solid-state physics~\cite[Chapter 12, equation~(12.29)]{AshcroftMermin1976} and is also a fundamental hypothesis in results on Liouville theorems and Green function asymptotics~\cite{KuchmentPinchover,KuchmentRaich}, homogenization~\cite{SistaTewary}, impurity eigenvalues created by localized perturbations~\cite{BirmanNonregular,ZelenkoVirtual},
and Anderson localization near internal band edges~\cite{KloppWeakDisorder,VeselicLocalization}.

Part~\textup{(i)} follows from the theorem of Klopp--Ralston~\cite{KloppRalston}. We prove parts~\textup{(ii)}--\textup{(iii)} by showing that, for a fixed band $E_n(V,k)$, a generic perturbation can make the Hessian nondegenerate at its global extrema. Without loss of generality, we only give the proof for global minima. Global maxima can be treated in a similar fashion.

\vspace{-2mm}
\subsection*{Proof idea and structure of the paper}
We briefly describe the main mechanism of the proof. Fix a band $E_n(V,k)$ and let $k_0$ be a global minimum at which $E_n(V,k_0)$ is a simple eigenvalue. Locally near $k_0$, we write the corresponding eigenvalue and normalized Bloch eigenfunction as $E(k)$ and $u(k)$, and set
\[
K=\ker d_k^2E(k_0),\qquad Y=K^\perp.
\]
Since $k_0$ is a minimum, the Hessian is positive definite on $Y$, so the only obstruction to nondegeneracy lies in the kernel $K$. The implicit function theorem allows us to eliminate the nondegenerate $Y$ directions and reduce the problem to the behavior of the band function in the $K$ directions. By the Feynman–Hellmann formula,
\begin{equation*}
    \partial_t|_{t=0} E(V+tq,k)=F_q(k),\quad F_q(k) = \int_{\mathbb{R}^d/\Gamma} q(x) |u(k,x)|^2 dx,
\end{equation*}
and by differentiation,
\begin{equation*}
    d F_q(k_0) \cdot \eta   = \int_{\mathbb{R}^d/\Gamma} q(x) \rho_{\eta}(x) dx,\qquad \rho_{\eta}=(\eta \cdot \partial_k) |u(k_0,\cdot)|^2.
\end{equation*}
The proof now splits according to whether these density variations vanish on $K$.
\begin{enumerate}
    \item In Section~\ref{sec:first-order}, we study the case when \begin{equation}\label{eq:outline-as1}
        \rho_\eta \not\equiv 0 \qquad \text{for some } \eta \in K.
    \end{equation} 
    In this case, we may choose $q=\rho_{\eta}$ to make $dF_q(k_0)\cdot \eta >0$.
    Thus the potential perturbation changes, to first order, the gradient of the band function in a direction belonging to the Hessian kernel $K$. After eliminating the $Y$ variables, we obtain a one-parameter family of reduced band functions on $K$. Then nonvanishing $dF_q(k_0)\cdot \eta >0$ gives a transversality direction for this family. Sard's theorem then implies that, for generic values of the perturbation parameter, the reduced Hessian has positive rank at every nearby critical point which is a local minimum. A Schur complement argument identifies the corank of the reduced Hessian with that of the full Hessian. Hence the Hessian corank decreases by at least one. 
    \item In Section~\ref{sec:second-order}, we study the case when \begin{equation}\label{eq:outline-as2}
        \rho_\eta \equiv 0 \qquad \text{for every } \eta \in K.
    \end{equation} 
    By \eqref{eq:outline-as2}, $dF_q(k_0)|_K=0$ for every $q\in \calX$, so we study the variation of the reduced Hessian itself. Let $g_t$ denote the band function after eliminating the $Y$ variables for the perturbed potential $V+tq$. Section~\ref{sec:second-order} shows that
\[
\frac{d}{dt}|_{t=0}d_z^2g_t(0)=\mathcal R(q),
\]
where $\mathcal{R}(q)$ is a quadratic form on $K$ depending linearly on $q$. To remove the degeneracy in the $K$ directions, the main point is to prove that there exists $q\in\mathcal X$ for which
\begin{equation}\label{eq:outline-Rq}
    \mathcal R(q)>0 \quad\text{on }K.
\end{equation}
\end{enumerate}
Thus the two mechanisms are complementary: in the first case a first-order perturbation lowers the Hessian corank, while in the second case a second-order perturbation removes the remaining kernel. The nondegeneracy of the Hessian follows by induction, and the isolation of the minima/maxima follows as a corollary.

The key mechanism to establish~\eqref{eq:outline-Rq} is Lemma~\ref{lem:weighted-phase}, which says that under the assumption~\eqref{eq:outline-as2}, $(\eta\cdot \partial_k)u(k_0,\cdot)$ cannot vanish on the nodal set of $u$ for $\eta \neq 0$. The key Lemma~\ref{lem:weighted-phase} for the Bloch density is proved in Section~\ref{sec:local} using a current identity and unique continuation.  Section~\ref{sec:generic-band} applies the local arguments in Section~\ref{sec:first-order} and~\ref{sec:second-order} to show that nondegeneracy is open and dense whenever the band extrema are simple. Section~\ref{sec:gap-genericity} combines this with Klopp--Ralston~\cite{KloppRalston} to prove the main Theorem~\ref{thm:main}. 

\vspace{-2mm}
\subsection*{Related work}
The Spectral Edge Conjecture is a long-standing problem in mathematical physics. An early formulation appears in the work of Colin de Verdi\`ere
~\cite{Colin1991}, who established the Morse property for finitely many bands of small generic potentials in dimension two. The conjecture was later
formulated for periodic elliptic operators by
Kuchment--Pinchover~\cite[Conjecture~20]{KuchmentPinchover}. See also
~\cite[Conjecture~5.25]{KuchmentSurvey} and the recent
survey~\cite[Conjecture~8.5]{Kuchment2023}. Klopp--Ralston~\cite{KloppRalston} proved generic simplicity of the endpoints at a spectral gap. Filonov--Kachkovskiy~\cite{FilonovKachkovskiy} proved that global band extrema are isolated for a wide class of two-dimensional periodic elliptic operators, while Kirsch--Simon~\cite{KirschSimon} proved uniqueness and a quantitative nondegeneracy at the bottom of the spectrum. Further perturbative results include~\cite{Colin1991,ParnovskiShterenberg2017,SistaTewary}.

There is a parallel algebraic theory for discrete and graph models. Do--Kuchment--Sottile~\cite{DoKuchmentSottile} established an algebraic genericity dichotomy for degenerate extrema and verified the Spectral Edge Conjecture for a specific maximal two-atomic $\mathbb Z^2$-periodic graph. Liu~\cite{LiuFermi} proved, using irreducibility of the Fermi variety, that for discrete periodic Schr\"odinger operators an extremal level set is finite in dimension two and has dimension at most $d-2$ in dimensions $d\geq3$. See also \cite{FilonovKachkovskiy2024} for a different proof for non-divisible lattices. Faust--Liu--Luo~\cite{FaustLiuLuo} subsequently obtained an improved quantitative two-dimensional cardinality bound. Berkolaiko--Canzani--Cox--Marzuola~\cite{BerkolaikoCanzaniCoxMarzuola} gave local-to-global criteria for extrema of dispersion bands on periodic graphs, and Faust--Sottile~\cite{FaustSottile} verified the Spectral Edge Conjecture for two infinite families of periodic graphs.

The isolated, single-band, nondegenerate edge hypothesis also has important consequences: Kuchment--Pinchover~\cite{KuchmentPinchover} relate it to Liouville theorems, and Kuchment--Raich~\cite{KuchmentRaich} derive Green function asymptotics at internal spectral edges under this hypothesis. Parnovski--Shterenberg enlarge the period lattice in their perturbation argument and state in~\cite[Remark~1.2]{ParnovskiShterenberg2017} that nondegeneracy in two dimensions under the same lattice perturbation was not known, and Theorem~\ref{thm:main} also answers this question. Stable degenerate edges occur for periodic quantum graphs~\cite{BerkolaikoKha} and for discrete periodic operators~\cite[Section~7]{FilonovKachkovskiy}; these examples reflect structural constraints of the graph or discrete models and do not contradict the continuum fixed-lattice result proved here.

\vspace{-2mm}
\subsection*{Acknowledgments}
The authors thank Ilya Kachkovskiy and Leonid Parnovski for helpful discussions. We thank ChatGPT for its advanced editorial and mathematical assistance. M.Y. acknowledges support from NSF grant DMS-2554813.

\section{Analytic perturbation theory and a current identity}
\label{sec:local}

We first fix notation for a simple eigenvalue under perturbation. Let $V_0\in\calX$, $k_0\in\T_*^d$, and let $E_0$ be a simple eigenvalue of $H_{V_0}(k_0)$. Choose a positively oriented circle $\gamma\subset\C$ which encloses $E_0$ and no other point of $\spec H_{V_0}(k_0)$. After shrinking a neighborhood $\calV$ of $V_0$ in $\calX$ and a coordinate neighborhood $U$ of $k_0$, the circle $\gamma$ lies in the resolvent set of $H_W(k)$ for all $(W,k)\in\calV\times U$, and the spectral projection
\begin{equation}
\label{eq:Riesz-projection}
  \Pi(W,k)=\frac{1}{2\pi i}\oint_\gamma(z-H_W(k))^{-1}\,dz
\end{equation}
has rank one. Define
\begin{equation}
\label{eq:local-energy}
  E(W,k)
  =\tr\left(\frac{1}{2\pi i}\oint_\gamma z(z-H_W(k))^{-1}\,dz\right).
\end{equation}
Thus $E(W,k)$ is the unique eigenvalue of $H_W(k)$ inside $\gamma$. It is continuous in $(W,k)$, analytic in $k$, and jointly analytic when $W$ is restricted to a finite-dimensional affine subspace of $\calX$. If $U$ and the finite-dimensional parameter set are contractible, one can choose a normalized eigenfunction $u(W,k)$ which is analytic in the finite-dimensional variables and satisfies
\begin{equation}
\label{eq:eigenvalue-equation}
  H_W(k)u(W,k)=E(W,k)u(W,k),
  \qquad
  \|u(W,k)\|_{L^2(\OmegaCell)}=1.
\end{equation}

The following lemma is a consequence of standard elliptic estimates. 

\begin{lemma}
\label{lem:bounded-regularity}
Let $U\subset\R^d$ be open. Assume that $E:U\to\R$ and $u:U\to H^2_{}(\OmegaCell)$ are analytic and satisfy
\[
  H_V(k)u(k)=E(k)u(k),
  \qquad
  \|u(k)\|_{L^2(\OmegaCell)}=1.
\]
For every multi-index $\beta$, every $1<p<\infty$ and every $0<{\sigma}<1$, 
\[
  \partial_k^\beta u(k,\cdot)\in W^{2,p}_{}(\OmegaCell) \quad \text{and} \quad  \partial_k^\beta u(k,\cdot)\in C^{1,{\sigma}}(\OmegaCell) 
\]
locally uniformly in $k\in U$. 
\end{lemma}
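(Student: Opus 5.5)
We argue by induction on $|\beta|$, using elliptic regularity for the periodic operator $H_V(k)$ on the torus $\OmegaCell$. Differentiating the eigenvalue equation $\partial_k^\beta$ times with the Leibniz rule gives
\begin{equation*}
  (H_V(k)-E(k))\,\partial_k^\beta u
  = -\sum_{0<\alpha\le\beta}\binom{\beta}{\alpha}\Bigl(\partial_k^\alpha H_V(k)-\partial_k^\alpha E(k)\Bigr)\partial_k^{\beta-\alpha}u .
\end{equation*}
Here $\partial_{k_j}H_V(k)=2(-i\partial_{x_j}+k_j)$, $\partial_{k_j}\partial_{k_l}H_V(k)=2\delta_{jl}$, and all higher derivatives vanish. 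The right-hand side therefore involves only the functions $\partial_k^{\gamma}u$ with $|\gamma|<|\beta|$ and their first $x$-derivatives, multiplied by bounded analytic coefficients.

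The equation is rewritten as $-\Delta w = f$ on the torus with $w=\partial_k^\beta u$, where
\[
  f = -2ik\cdot\nabla w - |k|^2 w - Vw + E w + \text{RHS}.
\]
The base case is $\beta=0$. We have $u(k)\in H^2\subset W^{1,2}$, and $V\in L^\infty$, so $f\in L^2$. Torus Calderón--Zygmund estimates give $\|w\|_{W^{2,p}}\lesssim \|\Delta w\|_{L^p}+\|w\|_{L^p}$, which we bootstrap. Sobolev embedding puts $w\in W^{1,p_1}$ with $p_1>2$, hence $f\in L^{p_1}$, hence $w\in W^{2,p_1}$, and so on. After finitely many steps (depending only on $d$) we reach every $p<\infty$. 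Morrey's embedding $W^{2,p}\subset C^{1,1-d/p}$ with $p>d$ then gives $C^{1,\sigma}$ for every $\sigma<1$.

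For the inductive step, assume that every $\partial_k^\gamma u$ with $|\gamma|<|\beta|$ lies in $W^{2,p}$ for all $p$, locally uniformly in $k$. Then the right-hand side lies in $W^{1,p}\subset L^p$. Moreover, $\partial_k^\beta u\in H^2$ because $u$ is analytic as an $H^2$-valued map. The same bootstrap applies verbatim.

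Local uniformity comes from two facts:
\begin{itemize}
  \item on compact subsets of $U$, the $H^2$ norms of $\partial_k^\beta u$ and the values of $\partial_k^\alpha E$ are bounded, since analytic maps are continuous;
  \item each elliptic estimate has constants depending only on $\|V\|_\infty$, $|k|$, $|E|$, $p$ and the lattice.
\end{itemize}

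The only mildly delicate point is that $V$ is merely $L^\infty$, so we cannot go past $W^{2,p}$. The regularity is therefore capped exactly at $W^{2,p}$ for $p<\infty$ and at $C^{1,\sigma}$ for $\sigma<1$, which matches the statement. We must also make sure the bootstrap exponents are chosen so that the number of steps is finite and independent of $k$. This holds because each step raises the exponent by a fixed Sobolev gain.
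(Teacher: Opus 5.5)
Your proposal is correct and is the standard elliptic-regularity argument the paper invokes without writing it out: differentiate the eigenvalue equation by Leibniz, bootstrap with Calder\'on--Zygmund estimates on the torus to every $W^{2,p}$, then apply Morrey, inducting on $|\beta|$. The only slip is a harmless sign: since $(-i\nabla+k)^2=-\Delta-2ik\cdot\nabla+|k|^2$, the drift term in $f$ should be $+2ik\cdot\nabla w$.
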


Let $E(k)$ and $u(k)$ satisfy the assumptions of Lemma~\ref{lem:bounded-regularity}, and let $k_0$ be a critical point of $E$. For $\xi,\zeta\in\R^d$, write
\begin{equation}
\label{eq:density-derivatives}
  \rho_\xi \coloneqq (\xi\cdot\partial_k)|u(k_0,\cdot)|^2,
  \qquad
  \rho_{\xi\zeta} \coloneqq (\xi\cdot\partial_k)(\zeta\cdot\partial_k)|u(k_0,\cdot)|^2.
\end{equation}
These functions are real-valued, continuous, bounded, and $\Gamma$-periodic.

\begin{lemma}
\label{lem:weighted-phase}
Assume that $\eta\in\ker d_k^2E(k_0).$ Take
\[
  u=u(k_0,\cdot),
  \qquad
  v=(\eta\cdot\partial_k)u(k_0,\cdot),
  \qquad
  Z=\{x\in\OmegaCell:u(x)=0\}.
\]
If
\begin{equation}
\label{eq:phase-hypotheses}
  (\eta\cdot\partial_k)|u(k_0,\cdot)|^2=0
  \quad\text{on }\OmegaCell,
  \qquad
  v=0
  \quad\text{on }Z,
\end{equation}
then we have $\eta=0$.
\end{lemma}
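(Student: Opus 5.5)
The plan is to pass from the periodic fibre problem to genuine (non-periodic) solutions of $(-\Delta+V-E_0)\psi=0$ on $\R^d$. Then the two hypotheses in \eqref{eq:phase-hypotheses} say that the $k$-derivative of the Bloch wave is an imaginary real-valued multiple of the Bloch wave itself. The kernel condition $\eta\in\ker d_k^2E(k_0)$ will then force that multiple to be locally constant, and unique continuation finishes the argument.

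\emph{Step 1: a linearly growing solution.} Set $\psi(k,x)=e^{ik\cdot x}u(k,x)$, which solves $(-\Delta+V-E(k))\psi(k,\cdot)=0$ on $\R^d$ with a $k$-independent operator. Differentiating in the direction $\eta$ at $k_0$, and using $\nabla_kE(k_0)=0$, gives
\[
(-\Delta+V-E_0)\psi_\eta=0,\qquad \psi_\eta:=e^{ik_0\cdot x}\bigl(v+i(\eta\cdot x)u\bigr),\quad \psi:=\psi(k_0,\cdot).
\]
The regularity needed to do this pointwise comes from Lemma~\ref{lem:bounded-regularity}.

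\emph{Step 2: the phase function.} The first hypothesis says $\operatorname{Re}(\bar u v)=0$. On the open set $\Omega=\{u\neq0\}$ we may therefore write $v=i\varphi u$ with $\varphi=\operatorname{Im}(v/u)$ real-valued, periodic and $C^{1,\sigma}$. Consequently $\psi_\eta=i\Phi\psi$ on $\Omega$, where $\Phi=\varphi+\eta\cdot x$.

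Subtracting $\Phi$ times the equation for $\psi$ from the equation for $\Phi\psi$ gives $2\nabla\Phi\cdot\nabla\psi+\psi\Delta\Phi=0$ on $\Omega$. Multiplying by $\bar\psi$ and taking real parts yields the current identity
\[
\nabla\cdot\bigl(|u|^2\nabla\Phi\bigr)=0\quad\text{on }\Omega,
\]
and this should hold weakly on all of $\R^d$. I would extend $|u|^2\nabla\varphi=\operatorname{Im}(\bar u\nabla v-v\nabla\bar u)+\dots$ by its formula in terms of $u$ and $v$, which is continuous everywhere and vanishes on $Z$ because $v=u=0$ there. This is exactly where the second hypothesis $v|_Z=0$ is used. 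Testing the identity against the periodic function $\varphi$ over one cell gives
\[
\int|u|^2|\nabla\varphi|^2=-\int|u|^2\,\eta\cdot\nabla\varphi .
\]

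\emph{Step 3: the Hessian identity.} Differentiating the fibre eigenvalue equation twice gives the standard formula
\[
\tfrac12 d_k^2E(k_0)[\eta,\eta]=|\eta|^2-\langle (H(k_0)-E_0)v,v\rangle ,
\]
using $\langle u,v\rangle$-orthogonality after adjusting the phase of $u$. With $v=i\varphi u$, the ground-state substitution identity gives $\langle (H-E_0)\varphi u,\varphi u\rangle=\int|u|^2|\nabla\varphi|^2$. Hence $\eta\in\ker d_k^2E(k_0)$ yields $\int|u|^2|\nabla\varphi|^2=|\eta|^2$.

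Combining this with Step 2,
\[
\int|u|^2|\nabla\varphi+\eta|^2=|\eta|^2-2|\eta|^2+|\eta|^2=0 .
\]
So $\nabla\Phi=0$ on $\Omega$, and $\Phi\equiv c$ on some component of $\Omega$. Then $\psi_\eta-ic\psi$ solves the Schr\"odinger equation and vanishes on an open set, so by unique continuation it vanishes identically. This gives $v=-i(\eta\cdot x-c)u$ on $\R^d$. Since $v$ and $u$ are periodic and bounded and $u\not\equiv0$ on an open set, $\eta\cdot x$ must be bounded there, which forces $\eta=0$.

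\emph{Main obstacle.} The delicate step is justifying the integrations by parts in Steps 2--3 across the nodal set $Z$, where $\varphi$ may be singular. I would first attempt to avoid $\varphi$ entirely by writing every integrand in terms of $u$ and $v$: for instance $|u|^2\nabla\varphi=\operatorname{Im}(\bar u\nabla v)-\varphi\operatorname{Im}(\bar u\nabla u)$, with $|u|^2|\nabla\varphi|^2$ controlled by $|\nabla v|^2+\dots$ near $Z$ using $v|_Z=0$ and the $C^{1,\sigma}$ bounds. Failing that, I would use cutoffs $\chi_\varepsilon(|u|)$ and let $\varepsilon\to0$, relying on $|Z|=0$ (by unique continuation) and on $v=0$ on $Z$ to kill the boundary terms.
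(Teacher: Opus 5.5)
Your outline is sound, and apart from Step 3 it is the paper's proof in different notation. Your $\varphi$ is the paper's $\alpha$, and your field $|u|^2\nabla\Phi=\operatorname{Im}(\bar u\nabla v-v\nabla\bar u)+\eta|u|^2$ is the paper's $j=(\eta\cdot\partial_k)J(k_0,\cdot)$. Testing it against $\varphi$ is the paper's integration by parts, and $\psi_\eta-ic\psi$ is $e^{ik_0\cdot x}(v-gu)$.

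The genuine difference is Step 3. The paper never evaluates $\int|u|^2|\nabla\varphi|^2$. It differentiates the Feynman--Hellmann identity $\nabla_kE=2\int J\,dx$ to get $\int j\,dx=\tfrac12 d_k^2E(k_0)\eta$. This is an identity between globally defined bounded functions, so $\eta\cdot\int j=0$ costs nothing and only one singular integration by parts is needed. Your route via $\tfrac12 d_k^2E(k_0)[\eta,\eta]=|\eta|^2-\langle(H-E_0)v,v\rangle$ and the ground-state representation is correct. It also gives a nice variational reading: the Hessian defect is $|\eta|^2$ minus the weighted Dirichlet energy of the phase. But it adds a second integration by parts involving the singular function $\varphi$.

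A side remark: the extended field is divergence free on all of $\R^d$ independently of $v|_Z=0$. One reason is that it is $(\eta\cdot\partial_k)$ of the conserved current $J$; another is that it lies in $W^{1,p}$, its divergence vanishes on $\{u\neq0\}$, and $|Z|=0$. The hypothesis $v|_Z=0$ is needed only to control boundary terms.

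The gap is the justification across $Z$, which you defer. It is exactly the part the paper has to work for, and neither of your suggested fixes works as stated.

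\emph{Pointwise control.} You cannot bound $|u|^2|\nabla\varphi|^2$ pointwise. On $\{u\neq0\}$ one has $u\nabla\varphi=-i\nabla v-\varphi\nabla u$, and $\varphi=-iv/u$ can be unbounded near $Z$. Indeed, $v|_Z=0$ and Lipschitz bounds give only $|v|\le C\dist(x,Z)$, while $|u|$ may be much smaller than $\dist(x,Z)$. Your second displayed formula should have $\operatorname{Re}(\bar u\nabla u)$ rather than $\operatorname{Im}$, and it still contains $\varphi$.

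\emph{Level-set cutoffs.} Cutoffs $\chi_\varepsilon(|u|)$ run into the same problem. On $\{\varepsilon\le|u|\le2\varepsilon\}$ the available bound is $|j|\le C\varepsilon+C\dist(x,Z)$, not $O(\varepsilon)$, so the boundary term is not controlled. In addition, you cannot test with $\varphi$ itself, because it is unbounded.

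\emph{What the paper does.} It first shows $|Z|=0$ and $|j|\le C\dist(x,Z)$; the latter is where $u=v=0$ on $Z$ enters. It then tests $\nabla\cdot j=0$ against $\chi_\varepsilon T_N(\varphi)$. Here $\chi_\varepsilon$ cuts off at distance $\varepsilon$ from $Z$ with $|\nabla\chi_\varepsilon|\le C\varepsilon^{-1}$, and $T_N$ is a bounded truncation with $0\le T_N'\uparrow 1$. The boundary term is then at most $C_N|\{\dist(x,Z)<2\varepsilon\}|\to0$. Finally it lets $\varepsilon\to0$ and then $N\to\infty$, using monotone convergence on the nonnegative term $T_N'(\varphi)|u|^2|\eta+\nabla\varphi|^2$ and dominated convergence on $T_N'(\varphi)\,\eta\cdot j$.

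\emph{Repairing Step 3.} If you keep your Step 3, the ground-state representation needs the same treatment. Only the inequality $\langle(H-E_0)v,v\rangle\le\int|u|^2|\nabla\varphi|^2$ is required. It follows from the exact identity for $\chi_\varepsilon T_N(\varphi)u$ together with weak lower semicontinuity of the quadratic form. The simpler repair is to replace Step 3 by $\int j\,dx=\tfrac12 d_k^2E(k_0)\eta$.
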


\begin{proof}
Lemma~\ref{lem:bounded-regularity} gives $u,v\in W^{2,p}\cap C^{1,{\sigma}}(\OmegaCell)$ for every $0<\sigma<1$ and $1<p<\infty$. The function $e^{ik_0\cdot x}u(x)$ solves
\[
  (-\Delta+V-E(k_0))\bigl(e^{ik_0\cdot x}u(x)\bigr)=0
\]
on $\R^d$. A nontrivial solution of this equation cannot vanish on a set of positive Lebesgue measure: at a density point it vanishes to infinite order by the argument of~\cite{deFigueiredoGossez}, and strong unique continuation~\cite{Aron,Carle} then implies that it vanishes identically. Hence
\begin{equation}
\label{eq:nodal-measure-zero}
  |Z|=0.
\end{equation}

Set $D=(\OmegaCell)\setminus Z$. The first condition in~\eqref{eq:phase-hypotheses} is
$2\operatorname{Re}(\overline u v)=0$. Therefore
\begin{equation}
\label{eq:alpha-definition}
  v=i\alpha u\quad\text{on }D,
  \qquad
  \alpha=-i\frac vu\in C^1(D;\R).
\end{equation}
The function $\alpha$ is periodic on $D$, which need not be connected.

Define
\begin{equation}
\label{eq:current}
\begin{split}
  J(k,x)
  &=\operatorname{Im}\bigl(\overline{u(k,x)}\nabla_xu(k,x)\bigr)
   +k|u(k,x)|^2,\\
  j=(\eta\cdot\partial_k)J(k_0,\cdot)&= \operatorname{Im}\bigl(\overline{v}\nabla_xu + \overline{u}\nabla_x v\bigr)+ \eta |u|^2+2k_0 \operatorname{Re}(\overline{u} v).
\end{split}
\end{equation}
Multiplying~\eqref{eq:eigenvalue-equation} by $\overline{u(k)}$ and taking imaginary parts gives
\[
  \nabla_x\cdot J(k,\cdot)=0.
\]
{The Feynman--Hellmann identity in $k$-variable is given by}
\[
  \nabla_kE(k)=2\int_{\OmegaCell}J(k,x)\,dx.
\]
{Differentiating this identity and $\nabla_x\cdot J(k,\cdot)=0$ in the direction $\eta$ at $k_0$ gives}
\begin{equation}
\label{eq:current-identities}
  \nabla_x\cdot j=0,
  \qquad
  \int_{\OmegaCell}j\,dx
  =\frac12d_k^2E(k_0)\eta=0.
\end{equation}

Differentiating~\eqref{eq:current} and using~\eqref{eq:alpha-definition} gives
\begin{equation}
\label{eq:j-phase}
  j=|u|^2(\eta+\nabla\alpha)\quad\text{on }D.
\end{equation}
The second condition in~\eqref{eq:phase-hypotheses} implies $j=0$ on $Z$. Since $u$ and $v$ are Lipschitz and vanish on $Z$, while their gradients are bounded,
\begin{equation}
\label{eq:j-distance}
  |j(x)|\leq C\dist(x,Z),
  \qquad x\in\OmegaCell.
\end{equation}

We next justify integration by parts without assumptions on the geometry of $Z$. If $Z=\varnothing$, take $\chi_\varepsilon=1$. Otherwise choose periodic Lipschitz functions $\chi_\varepsilon:\OmegaCell\to [0,1]$ such that
\begin{equation}
\label{eq:nodal-cutoff}
  \chi_\varepsilon=0\ \text{on }\{\dist(x,Z)\leq\varepsilon\},
  \qquad
  \chi_\varepsilon=1\ \text{on }\{\dist(x,Z)\geq2\varepsilon\},
  \qquad
  |\nabla\chi_\varepsilon|\leq C\varepsilon^{-1}.
\end{equation}
We choose the cutoffs so that $\chi_\varepsilon(x)\to 1$ monotonically for every $x\in D$ as $\varepsilon\to 0$.
Choose bounded smooth functions $T_N:\R\to\R$ satisfying
\begin{equation}
\label{eq:truncations}
  T_N(0)=0,
  \qquad
  0\leq T_N'\leq1,
  \qquad
  T_N'(s)\uparrow 1\quad\text{for every }s\in\R.
\end{equation}
The truncation is needed because $\alpha=v/u$ need not be bounded near $Z$. Testing the first identity in~\eqref{eq:current-identities} against $\chi_\varepsilon T_N(\alpha)$ gives
\begin{equation}
\label{eq:truncated-integration}
  \int_D\chi_\varepsilon T_N'(\alpha)j\cdot\nabla\alpha\,dx
  =-\int_D T_N(\alpha)j\cdot\nabla\chi_\varepsilon\,dx.
\end{equation}
For fixed $N$, the right-hand side is bounded in absolute value by
\[
  C_N\bigl|\{x:\dist(x,Z)<2\varepsilon\}\bigr|,
\]
which tends to zero by~\eqref{eq:nodal-measure-zero}. From~\eqref{eq:j-phase},
\[
  j\cdot\nabla\alpha
  =|u|^2|\eta+\nabla\alpha|^2-\eta\cdot j.
\]
Letting first $\varepsilon\to0$ and then $N\to\infty$ in~\eqref{eq:truncated-integration}, using monotone convergence for the first term and dominated convergence for the second, gives
\begin{equation}
\label{eq:phase-energy}
  \int_D|u|^2|\eta+\nabla\alpha|^2\,dx
  =\eta\cdot\int_{\OmegaCell}j\,dx=0.
\end{equation}
Thus
\begin{equation}
\label{eq:alpha-gradient}
  \nabla\alpha=-\eta\quad\text{on }D.
\end{equation}

Lift all periodic functions to $\R^d$. On a nonempty connected component $D_*$ of the lifted set $\{u\neq0\}$,
\begin{equation}
\label{eq:alpha-affine}
  \alpha(x)=C_0-\eta\cdot x.
\end{equation}
Take $D_0=-i\nabla+k_0$ and $L=D_0^2+V-E(k_0)$. Differentiating the eigenvalue equation in the direction $\eta$ and using $dE(k_0) \cdot \eta =0$ gives
\begin{equation}
\label{eq:v-equation}
  Lv=-2\eta\cdot D_0u.
\end{equation}
For $g(x)=i(C_0-\eta\cdot x)$, the commutator identity $[D_{0,j},g]=-\eta_j$ gives
\begin{equation}
\label{eq:gu-equation}
  L(gu)=-2\eta\cdot D_0u.
\end{equation}
The function $v-gu$ solves the homogeneous equation and vanishes on the open set $D_*$. Unique continuation~\cite{Aron} gives $v=gu$ on $\R^d$. For every $\gamma\in\Gamma$, periodicity of $u$ and $v$ gives
\[
  (\eta\cdot\gamma)u(x)=0
  \qquad\text{for all }x\in\R^d.
\]
Since $u\not\equiv0$ and $\Gamma$ spans $\R^d$, we obtain $\eta=0$.
\end{proof}

\section{First-order perturbations}
\label{sec:first-order}
In this section, we discuss the first-order perturbation, which works when $\rho_{\eta}\not\equiv 0$. For $V\in\calX$, assume that
\begin{equation}
\label{eq:minimum-set}
  k_0\in \calM_n(V) \coloneqq \argmin_{k\in\T_*^d}E_n(V,k),
\end{equation}
and $E_n(V,k_0)$ is a simple eigenvalue of $H_V(k_0)$. Let $E(W,k)$ be the eigenvalue defined by~\eqref{eq:local-energy} on $\calV\times U$, after choosing $\gamma$, $\calV$, and $U$ as in Section~\ref{sec:local}. We use the notation
\[
  E(k)=E(V,k),
  \qquad
  u(k)=u(V,k).
\]
Shrink $U$ so that $E(k)=E_n(V,k)$ for $k\in U$. Define
\begin{equation}
\label{eq:K-Y}
  K \coloneqq \ker d_k^2E(k_0), \qquad Y\coloneqq K^\perp.
\end{equation}
Since $k_0$ is a minimum, 
\begin{equation}
\label{eq:HY-positive}
  H_Y=d_k^2E(k_0)|_{Y\times Y}>0.
\end{equation}

By the Feynman--Hellmann formula, for $q\in \calX$
\begin{equation}
\label{eq:FH-potential}
  \left.\partial_t\right|_{t=0}E(V+tq,k)=F_q(k), \qquad   F_q(k) \coloneqq \int_{\OmegaCell}q(x)|u(k,x)|^2\,dx.
\end{equation}
For $\eta\in K$, define $\rho_\eta$ by~\eqref{eq:density-derivatives}. Then
\begin{equation}
\label{eq:dF-rho}
  dF_q(k_0) \cdot \eta 
  =\int_{\OmegaCell}q\rho_\eta\,dx.
\end{equation}
In particular, if $\rho_\eta\not\equiv0$, then we may take $q=\rho_\eta \in \calX$ such that 
\[
  dF_q(k_0) \cdot \eta =\|\rho_\eta\|_{L^2(\OmegaCell)}^2>0.
\]

\begin{proposition}
\label{prop:first-order-corank}
If $r = \dim K >0$ and $q\in\calX$ satisfy
\begin{equation}
\label{eq:first-order-assumption}
  dF_q(k_0)|_K\neq0,
\end{equation}
then there exist a coordinate neighborhood $U_0\Subset U$ of $k_0$, a neighborhood $\calV_0\subset\calV$ of $V$, and $\varepsilon_0>0$ such that for every $W\in\calV_0$, there exists an open dense set $\calG_{W,\varepsilon_0}\subset(-\varepsilon_0,\varepsilon_0)$ of full Lebesgue measure such that for any $a\in\calG_{W,\varepsilon_0}$ 
\begin{equation}
\label{eq:first-order-corank-bound}
  \dim\ker d_k^2E(W+aq,k)\leq r-1
\end{equation}
at critical points $k\in U_0$ satisfying
\begin{equation}
\label{eq:critical-psd}
  d_kE(W+aq,k)=0,
  \qquad
  d_k^2E(W+aq,k)\geq0.
\end{equation}
Moreover, for each $a\in\calG_{W,\varepsilon_0}$, there exists $\delta>0$ such that~\eqref{eq:first-order-corank-bound} remains valid after replacing $W+aq$ by any $W'\in\calV$ with
\[
  \|W'-(W+aq)\|_{L^\infty}<\delta.
\]
\end{proposition}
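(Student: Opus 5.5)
Write $k=k_0+y+z$ with $y\in Y$, $z\in K$, and work with $E(W+aq,k)$ jointly for $W$ near $V$, $k$ near $k_0$ and $a$ small. This function is analytic in $(a,k)$ for fixed $W$, with $C^2$ dependence on $k$ continuous in $(W,a)$. Since $\partial_y^2E(V,k_0)=H_Y>0$ by \eqref{eq:HY-positive}, the implicit function theorem applied to $\partial_yE(W+aq,k_0+y+z)=0$ gives a unique solution $y=y(W,a,z)$ near $0$. It holds for $(W,a,z)\in\calV_0\times(-\varepsilon_0,\varepsilon_0)\times B_K$, after shrinking $\calV_0$, $\varepsilon_0$, $U_0$, and $y$ is analytic in $(a,z)$ for fixed $W$. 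Set
\[
  g_{W,a}(z)=E\bigl(W+aq,k_0+y(W,a,z)+z\bigr).
\]
Every critical point of $E(W+aq,\cdot)$ in $U_0$ then has the form $k=k_0+y(W,a,z)+z$ with $z$ a critical point of $g_{W,a}$. By the Schur complement formula, and since $\partial_y^2E$ stays invertible on $U_0$, we get $\dim\ker d_k^2E(W+aq,k)=\dim\ker d_z^2g_{W,a}(z)$. Moreover $d_k^2E\ge0$ forces $d_z^2 g_{W,a}\ge0$. The proposition thus reduces to the $r$-dimensional problem: for a.e.\ $a$, every critical point $z$ of $g_{W,a}$ with $d_z^2g_{W,a}(z)\ge0$ has $d_z^2g_{W,a}(z)\neq0$.

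For the transversality step, choose a unit $\eta\in K$ with $dF_q(k_0)\cdot\eta\neq0$, and split $K=\R\eta\oplus K'$, writing $z=s\eta+z'$. Consider the map $\Phi_{W}(a,z)=d_zg_{W,a}(z)\in K^*$. At $(W,a,z)=(V,0,0)$, the envelope identity $\partial_a g=\partial_aE$ (valid since $\partial_yE=0$ along $y(W,a,z)$) and \eqref{eq:FH-potential} give $\partial_a\partial_s g_{V,0}(0)=dF_q(k_0)\cdot\eta+O(\text{terms with }\partial_a y)$. These correction terms vanish because $\partial_y\partial_sE(V,k_0)=0$, as $K\perp Y$ are eigenspaces of the Hessian. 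Hence $\partial_a\langle\Phi,\eta\rangle\neq0$ at the base point, and by continuity on a neighbourhood, uniformly for $W\in\calV_0$. By the implicit function theorem, the equation $\langle d_zg_{W,a}(z),\eta\rangle=0$ can then be solved as $a=A_W(s,z')$. This function is $C^1$, and real-analytic in $(s,z')$ for fixed $W$.

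For the Sard step, let $\Sigma_W$ be the set of critical points, i.e.\ $\{(A_W(s,z'),s\eta+z')\}$ intersected with $\{d_{z'}g=0\}$. A critical point $z$ of $g_{W,a}$ with $d_z^2g_{W,a}(z)=0$ and $a=A_W(z)$ has full derivative of $\langle d_zg,\eta\rangle$ in $(s,z')$ equal to zero. Differentiating the defining relation then gives $dA_W(z)=0$, so $a$ is a critical value of $A_W$. The extra condition $d_{z'}g=0$ only restricts the points further. By Sard's theorem, since $A_W$ is $C^\infty$ (indeed analytic) in $r$ real variables into $\R$, the set of critical values has measure zero. The set of such $a$ is also closed in $(-\varepsilon_0,\varepsilon_0)$ after compactly shrinking $U_0$, being the image of a compact set. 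Its complement $\calG_{W,\varepsilon_0}$ is therefore open, dense and of full measure. The semidefiniteness hypothesis \eqref{eq:critical-psd} is not even needed here, although it becomes necessary later.

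The final stability claim follows by compactness. Fix $a\in\calG_{W,\varepsilon_0}$. On $\overline{U_0}$ the set of critical points of $E(W+aq,\cdot)$ is compact, and at each of them $d_k^2E$ has rank at least $d-r+1$. The rank condition is open, and critical points of nearby $W'$ lie near those of $W+aq$. Since $E(W',\cdot)\to E(W+aq,\cdot)$ in $C^2(\overline{U_0})$ as $\|W'-(W+aq)\|_{L^\infty}\to0$, by analytic perturbation theory with the resolvent uniform in $k$, the bound persists for some $\delta>0$. I expect the main obstacle to be uniformity. The implicit-function neighbourhoods, and the nonvanishing of $\partial_a\partial_s g$, must hold uniformly in $W\in\calV_0$, which is infinite-dimensional. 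Only $C^2$ continuity in $W$ is available, so every step must use joint continuity in $W$ but differentiability or analyticity only in $(a,k)$.
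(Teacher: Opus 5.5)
Your proposal is correct and follows the paper's proof essentially step by step: the implicit-function reduction to $g$ on $K$, transversality $\partial_a(d_zg\cdot\eta)\neq0$ at the base point (from Feynman--Hellmann and the vanishing of the off-diagonal Hessian block $d_k^2E(k_0)|_{Y\times K}$), Sard's theorem, the Schur complement, and compactness for the stability claim. Writing $\{\Phi=0\}$ as the graph $a=A_W(z)$ is just the paper's Sard argument for the projection $\pi:\Sigma_W\to\R$ expressed in coordinates. Your observation that the semidefiniteness condition is never used also holds for the paper's argument.
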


\begin{proof}
Use a local coordinate chart at $k_0$ and write
\[
  k=k_0+y+z,
  \qquad
  y\in Y,
  \quad
  z\in K.
\]
Taking some neighborhood $\calV_0\subset \calV$ of $V$ and $\varepsilon_0>0$, for any fixed $W\in\calV_0$ and $|a|<\varepsilon_0$, we define
\[
  f(W,a,y,z) = E(W+aq,k_0+y+z).
\]
By~\eqref{eq:HY-positive}, we may choose closed small balls $\overline B_Y\subset Y$ and $\overline B_K\subset K$ centered at zero and shrinking $\calV_0$, and $\varepsilon_0$ if needed, such that $\partial_y^2f$ is positive definite on
$\calV_0\times(-\varepsilon_0,\varepsilon_0)\times\overline B_Y\times\overline B_K$. By the implicit function theorem, there exists a function $  \psi(W,a,z)\in B_Y$ such that
\begin{equation}
\label{eq:psi-first}
  \partial_yf(W,a,\psi(W,a,z),z)=0, \qquad z\in\overline B_K.
\end{equation}
The positivity of $\partial_y^2f$ implies the uniqueness of $\psi(W,a,z)$ as a solution of \eqref{eq:psi-first} in $B_Y$. Hence, for $k=k_0+y+z$ with $(y,z)\in B_Y\times B_K$ such that $d_kE(W+aq,k)=0$, we have $y=\psi(W,a,z)$. 

By \eqref{eq:first-order-assumption}, we may choose $e\in K$ such that $dF_q(k_0) \cdot  e \neq0$. For simplicity, we define
\begin{equation}
\label{eq:g-Phi-first}
  g(W,a,z)=f(W,a,\psi(W,a,z),z), \qquad \Phi(W,a,z)=d_zg(W,a,z) \cdot e .
\end{equation}
Differentiating~\eqref{eq:psi-first} in $z$ at $(V,0,0)$ and taking $\zeta\in K$ gives
\[
  H_Y(D_z\psi(V,0,0) \zeta,\cdot)
  +d_k^2E(k_0)
  (\zeta,\cdot)\big|_Y=0.
\]
By \eqref{eq:HY-positive} and that $K$ is the kernel of $d_k^2E(k_0)$, we have
\begin{equation}
\label{eq:Dzpsi-zero}
  D_z\psi(V,0,0)=0.
\end{equation}
Differentiating the identity
\[\partial_a g(V,a,z) \rvert_{a=0} = F_q (k_0+\psi(V,0,z)+z)\]
in $z$ and using~\eqref{eq:FH-potential},~\eqref{eq:Dzpsi-zero} and the choice of $e\in K$, we obtain
\begin{equation}
\label{eq:partial-a-Phi}
  \partial_a\Phi(V,0,0)= dF_q(k_0)\cdot (D_z \psi(V,0,0) e + e) = dF_q(k_0)\cdot e \neq0.
\end{equation}
After shrinking the neighborhood $\calV_0 \times (-\varepsilon_0, \varepsilon_0) \times B_Y \times B_K$ of $(V,0,0)$, $\partial_a\Phi$ does not vanish on it. Hence, for $W\in\calV_0$, the map $(a,z)\mapsto\Phi(W,a,z)$ is a submersion, and the set
\[
  \Sigma_W=\{(a,z):\Phi(W,a,z)=0\}
\]
is a smooth submanifold of $\R\times K$. 

Let $\pi:\Sigma_W\to\R$ be the projection $\pi(a,z)=a$. A point $(a,z)\in\Sigma_W$ is critical for $\pi$ if and only if
\[
  D_z\Phi(W,a,z)=0.
\]
Indeed, the tangent space of $\Sigma_W$ is the kernel of $\partial_a\Phi\,da+D_z\Phi$, and $\partial_a\Phi\neq0$. Thus the critical parameters are exactly the critical values of the projection $\pi$. Using Sard's theorem on the projection $\pi$ and by the previous equivalence, for almost every $a\in (-\varepsilon_0,\varepsilon_0)$, we have
\begin{equation}
\label{eq:regular-slice-first}
  D_z\Phi(W,a,z)\neq0
  \quad\text{whenever }z\in\overline B_K
  \text{ and }\Phi(W,a,z)=0,
\end{equation}
which is an open condition. Hence, the corresponding set $\calG_{W,\varepsilon_0}\subset(-\varepsilon_0,\varepsilon_0)$ is open,
dense, and of full Lebesgue measure.

For $a\in\calG_{W,\varepsilon_0}$ and $k=k_0+\psi(W,a,z)+z$ satisfying~\eqref{eq:critical-psd}, we have
$d_zg(W,a,z)=0$. Hence $\Phi(W,a,z)=0$. Put
\[
  H=d_z^2g(W,a,z)\geq0.
\]
Since $\Phi= d_zg\cdot e$,
\begin{equation}
\label{eq:DzPhi-H}
  D_z\Phi(W,a,z) v = H(v,e),
  \qquad v\in K.
\end{equation}
By~\eqref{eq:regular-slice-first}, $D_z\Phi(W,a,z)\neq0$. Hence, we have
$H(\,\cdot\,,e)\neq0$ and $H$ has rank at least one. Since
$\dim K=r$, we have
\[
  \dim\ker H\leq r-1.
\]

To compare $H$ with the full Hessian, write the latter acting on $Y\oplus K$ as
\[
  \begin{pmatrix}A&B\\B^{\mathsf T}&C\end{pmatrix},
  \qquad A>0.
\]
The Hessian of $g$ is the Schur complement
\[
  H=C-B^{\mathsf T}A^{-1}B.
\]
The factorization
\[
  \begin{pmatrix}A&B\\B^{\mathsf T}&C\end{pmatrix}
  =
  \begin{pmatrix}I&0\\B^{\mathsf T}A^{-1}&I\end{pmatrix}
  \begin{pmatrix}A&0\\0&H\end{pmatrix}
  \begin{pmatrix}I&A^{-1}B\\0&I\end{pmatrix}
\]
shows that their kernels have the same dimension. This proves~\eqref{eq:first-order-corank-bound}.

For a fixed regular value $a$, compactness of the $z$-ball gives
\begin{equation}\label{eq:prop3.1-pos}
  |\Phi(W,a,z)|+|D_z\Phi(W,a,z)|\geq c>0
\end{equation}
for all $z\in\overline B_K$. The inequality \eqref{eq:prop3.1-pos}, the positivity of $\partial_y^2f$, and the rank-one property of~\eqref{eq:Riesz-projection} persist under a sufficiently small perturbation of the potential. This proves the last assertion.
\end{proof}

\section{Second-order perturbations}
\label{sec:second-order}

In this section, we follow the notation of Section~\ref{sec:first-order} and focus on the case 
\begin{equation}
\label{eq:rho-vanishes}
  \rho_\eta=0
  \qquad\text{for every }\eta\in K.
\end{equation}
For $q\in\calX$, define
\[
  f_t(y,z)=E(V+tq,k_0+y+z).
\]
As in \eqref{eq:psi-first}, let $\psi(t,z)$ be the function such that
\begin{equation}
\label{eq:psi-second}
  \partial_yf_t(\psi(t,z),z)=0,
\end{equation}
and, for simplicity, we write
\begin{equation}
\label{eq:g-second}
  g_t(z)=f_t(\psi(t,z),z).
\end{equation}
If $Y=\{0\}$, all terms involving $Y$ are omitted. We introduce
\begin{equation}
\label{eq:kappa-R}
  \kappa(q)=-H_Y^{-1}\bigl(dF_q(k_0)|_Y\bigr),
  \qquad
  \mathcal R(q)
  =d^2F_q(k_0)|_{K\times K}
   +d^3E(k_0)(\kappa(q),\cdot,\cdot)|_{K\times K}.
\end{equation}
The following lemma gives the infinitesimal change of the Hessian in the $K$ direction.
\begin{lemma}
\label{lem:second-order-correction}
For every $q\in\calX$,
\begin{equation}
\label{eq:variation-reduced-Hessian}
  \left.\partial_t\right|_{t=0}d_z^2g_t(0)=\mathcal R(q),
\end{equation}
where for $\eta,\zeta\in K$, 
\begin{equation}
\label{eq:R-Theta}
  \mathcal R(q) (\eta,\zeta)
  =\int_{\OmegaCell}q(x)\Theta_{\eta,\zeta}(x)\,dx,
\end{equation}
with
\begin{equation}
\label{eq:Theta}
  \Theta_{\eta,\zeta}
  =\rho_{\eta\zeta}+\rho_{\chi(\eta,\zeta)}, 
  \quad
  \chi(\eta,\zeta)=-H_Y^{-1}C(\eta,\zeta),
  \quad 
  C(\eta,\zeta)[y]=d^3E(k_0)(y,\eta,\zeta).
\end{equation}
\end{lemma}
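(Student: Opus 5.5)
The plan is to differentiate the reduced function $g_t(z)=f_t(\psi(t,z),z)$ twice in $z$ and once in $t$ at $(t,z)=(0,0)$. We then use the implicit-function relation \eqref{eq:psi-second} to express the $t$- and $z$-derivatives of $\psi$ in terms of derivatives of $E$. Finally, the Feynman--Hellmann formula \eqref{eq:FH-potential} converts $t$-derivatives of $E$ into integrals against $q$.

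First record the derivatives of $\psi$ at $(0,0)$. By \eqref{eq:Dzpsi-zero} (with $W=V$), $D_z\psi(0,0)=0$. Differentiating \eqref{eq:psi-second} in $t$ gives
\[
H_Y\,\partial_t\psi(0,0)+\partial_t\partial_y f_t(0,0)=0 .
\]
The second term equals $dF_q(k_0)|_Y$, so $\partial_t\psi(0,0)=\kappa(q)$. Differentiating \eqref{eq:psi-second} twice in $z$ along $\eta,\zeta\in K$ gives
\[
H_Y\,D_z^2\psi(0,0)(\eta,\zeta)+d^3E(k_0)(\cdot,\eta,\zeta)|_Y=0,
\]
because every term containing $D_z\psi(0,0)$ drops out. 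Hence $D_z^2\psi(0,0)(\eta,\zeta)=\chi(\eta,\zeta)$.

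Next compute $d_z^2 g_t$ by the chain rule. Since $\partial_y f_t(\psi,z)=0$ identically, we have $d_zg_t(z)=\partial_zf_t(\psi(t,z),z)$. This envelope-type identity avoids differentiating $\psi$ once. Differentiating again gives
\[
d_z^2g_t=\partial_z^2f_t+\partial_y\partial_zf_t\,D_z\psi .
\]
Differentiating in $t$ at $0$ and using $D_z\psi(0,0)=0$ leaves three terms. The first is $\partial_t\partial_z^2 f_t(0,0)=d^2F_q(k_0)|_{K\times K}$. The second is $d^3E(k_0)(\partial_t\psi,\cdot,\cdot)|_{K\times K}=d^3E(k_0)(\kappa(q),\cdot,\cdot)|_{K\times K}$. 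The third is $\partial_y\partial_z f_0(0,0)\,\partial_tD_z\psi$, which vanishes because the mixed block of $d_k^2E(k_0)$ between $Y$ and $K$ is zero, $K$ being the kernel. This proves \eqref{eq:variation-reduced-Hessian}.

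For \eqref{eq:R-Theta}, write $d^2F_q(k_0)(\eta,\zeta)=\int q\rho_{\eta\zeta}$ by differentiating $F_q$ under the integral. Lemma~\ref{lem:bounded-regularity} justifies this, since $k$-derivatives of $u$ are bounded continuous uniformly. For the other term, use the symmetry of $H_Y$ to get
\[
d^3E(k_0)(\kappa(q),\eta,\zeta)=-\langle H_Y^{-1}dF_q(k_0)|_Y,\,C(\eta,\zeta)\rangle=dF_q(k_0)\cdot\chi(\eta,\zeta).
\]
By \eqref{eq:dF-rho}, valid for any direction, this equals $\int q\rho_{\chi(\eta,\zeta)}$.

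The main obstacle is regularity bookkeeping. One needs $f_t$ to be $C^3$ jointly in $(t,y,z)$ and $\psi$ to be $C^2$, so that mixed derivatives commute and $\partial_t\partial_y^a\partial_z^b f$ equals the corresponding $k$-derivative of $F_q$. This follows from the joint analyticity of $E$ along the one-dimensional affine family $V+tq$, noted in Section~\ref{sec:local}, together with analytic dependence of $u$ and Lemma~\ref{lem:bounded-regularity}.
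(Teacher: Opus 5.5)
Your proposal is correct and is essentially the paper's argument with the order of differentiation swapped. You use the envelope identity in $z$ ($d_zg_t=\partial_zf_t(\psi,z)$) and then differentiate in $t$, which needs $\partial_t\psi(0,0)=\kappa(q)$ and the vanishing $Y$--$K$ block of $d_k^2E(k_0)$, and lands on the $\kappa$-form of $\mathcal R(q)$. The paper instead uses the envelope identity in $t$, $\partial_t|_{t=0}g_t(z)=F_q(k_0+\psi(0,z)+z)$, then differentiates twice in $z$ using $D_z^2\psi(0,0)=\chi(\eta,\zeta)$, which lands on the $\Theta$-form. In both routes the symmetry of $H_Y^{-1}$ converts one form into the other, so your computation of $D_z^2\psi(0,0)$ is not actually used.
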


\begin{proof}
Differentiate~\eqref{eq:psi-second} in $t$ at $(t,z)=(0,0)$. \eqref{eq:FH-potential} gives
\begin{equation}
\label{eq:dtpsi-kappa}
  \partial_t\psi(0,0)=\kappa(q).
\end{equation}
 As in~\eqref{eq:Dzpsi-zero}, 
 \[
   D_z\psi(0,0)=0.
 \]
Differentiating~\eqref{eq:psi-second} twice in $z$ and pairing with $\eta,\zeta\in K$ gives
\begin{equation}
\label{eq:Dzzpsi-chi}
  D_z^2\psi(0,0)(\eta,\zeta)=\chi(\eta,\zeta).
\end{equation}
Differentiating with respect to $t$ gives
\begin{align}
\label{eq:partial-gt}
  \partial_t g_t(z) 
  & = \partial_t E_t(k_0+\psi(t,z)+z) +dE_t(k_0+\psi(t,z)+z) \cdot \partial_t\psi(t,z) \\
  & = \partial_t E_t(k_0+\psi(t,z)+z). \nonumber
\end{align}
where the second term in \eqref{eq:partial-gt} vanishes by \eqref{eq:psi-second} and 
$\partial_t\psi(t,z)\in Y$. Evaluating at $t=0$, 
\begin{equation}\label{eq:dt0gt}
    \left.\partial_t\right|_{t=0}g_t(z) = F_q(k_0+\psi(0,z)+z).
\end{equation}
Taking two derivatives in $z$ at zero and using~\eqref{eq:Dzzpsi-chi} gives
\[
  \left.\partial_t\right|_{t=0}d_z^2g_t(0)(\eta,\zeta)
  =d^2F_q(k_0)(\eta,\zeta)
   +dF_q(k_0)\cdot \chi(\eta,\zeta).
\]
By symmetry of $H_Y^{-1}$, the last term is
$d^3E(k_0)(\kappa(q),\eta,\zeta)$. This proves~\eqref{eq:variation-reduced-Hessian}. 
\end{proof}

\begin{lemma}
\label{lem:second-order-direction}
Assume that $K\neq\{0\}$ and~\eqref{eq:rho-vanishes} holds. Then there exists $q\in\calX$ such that
\begin{equation}
\label{eq:R-positive}
  \mathcal R(q)>0
  \qquad\text{on }K.
\end{equation}
\end{lemma}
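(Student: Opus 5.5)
The plan is to argue by duality in the finite-dimensional space $\mathrm{Sym}(K)$ of symmetric bilinear forms on $K$, and to reduce to Lemma~\ref{lem:weighted-phase} by evaluating on the nodal set $Z$.

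\emph{Step 1: reduction to a statement about $\Theta$.} By Lemma~\ref{lem:second-order-correction}, $q\mapsto\mathcal R(q)$ is a linear map $\calX\to\mathrm{Sym}(K)$. Its image $S$ is a linear subspace of $\mathrm{Sym}(K)$. Suppose, for contradiction, that $S$ does not meet the open convex cone $\mathcal P$ of positive definite forms. The finite-dimensional Hahn--Banach separation theorem then gives a nonzero linear functional $\ell$ with $\ell|_S=0$ and $\ell>0$ on $\mathcal P$.

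Represent $\ell$ through the trace pairing with a symmetric form $P$ on $K$. Positivity of $\ell$ on $\mathcal P$ forces $P\ge0$, and $P\neq0$. Diagonalizing, we can write $P=\sum_{l=1}^m\eta_l\otimes\eta_l$ with $m\ge1$ and every $\eta_l\in K\setminus\{0\}$. The condition $\ell|_S=0$, combined with \eqref{eq:R-Theta}, becomes
\[
  \int_{\OmegaCell}q\sum_{l=1}^m\Theta_{\eta_l,\eta_l}\,dx=0\qquad\text{for all }q\in\calX.
\]
Each $\Theta_{\eta,\zeta}$ is continuous and bounded by Lemma~\ref{lem:bounded-regularity}. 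Taking $q$ equal to the sum itself therefore shows $\sum_l\Theta_{\eta_l,\eta_l}\equiv0$ pointwise on $\OmegaCell$.

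\emph{Step 2: evaluation on the nodal set.} Write $u=u(k_0,\cdot)$ and $v_l=(\eta_l\cdot\partial_k)u(k_0,\cdot)$. For any $\xi$ we have $\rho_\xi=2\operatorname{Re}(\overline u\,(\xi\cdot\partial_k)u)$, so $\rho_\xi=0$ on $Z=\{u=0\}$. In particular the term $\rho_{\chi(\eta_l,\eta_l)}$ in \eqref{eq:Theta} vanishes on $Z$.

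Next, $\rho_{\eta\eta}=2|(\eta\cdot\partial_k)u|^2+2\operatorname{Re}(\overline u\,(\eta\cdot\partial_k)^2u)$. On $Z$ this reduces to $2|v_l|^2$ when $\eta=\eta_l$. Hence on $Z$,
\[
  0=\sum_l\Theta_{\eta_l,\eta_l}=2\sum_l|v_l|^2,
\]
so $v_l=0$ on $Z$ for each $l$.

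\emph{Step 3: contradiction via Lemma~\ref{lem:weighted-phase}.} By hypothesis \eqref{eq:rho-vanishes}, $\rho_{\eta_l}\equiv0$, and Step~2 gives $v_l=0$ on $Z$. So both conditions in \eqref{eq:phase-hypotheses} hold for $\eta_l\in K=\ker d_k^2E(k_0)$. If $Z=\varnothing$ the second condition holds vacuously. Lemma~\ref{lem:weighted-phase} then yields $\eta_l=0$, contradicting $P\neq0$. Therefore $S\cap\mathcal P\neq\varnothing$, which is exactly the existence of $q\in\calX$ with $\mathcal R(q)>0$ on $K$.

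The substantive input is Lemma~\ref{lem:weighted-phase}, which is already available, so I expect no serious obstacle. The point needing the most care is the identification of $\Theta_{\eta_l,\eta_l}$ on $Z$. There one has to check that the $Y$-correction $\rho_{\chi}$ and the second-derivative cross term both carry a factor of $\overline u$, and hence drop out.
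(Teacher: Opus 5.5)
Your proposal is correct and matches the paper's proof step for step. Convex separation in $\operatorname{Sym}^2(K^*)$ gives a nonzero positive semidefinite form annihilating $\mathcal R(\calX)$, and testing with $q=\sum_l\Theta_{\eta_l,\eta_l}$ forces pointwise vanishing. On $Z$ the $\overline{u}$-terms and $\rho_{\chi}$ drop out, leaving $2\sum_l|v_l|^2=0$, and Lemma~\ref{lem:weighted-phase} then gives $\eta_l=0$. The only differences are cosmetic: you absorb the weights $\mu_j$ into the $\eta_l$, and you handle $Z=\varnothing$ in the final step rather than separately.
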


\begin{proof}
As $\mathcal R(\calX)\subset\operatorname{Sym}^2(K^*)$, if
$\mathcal R(\calX)$ did not intersect the cone of positive definite forms
\[\mathcal{C} = \{A\in \operatorname{Sym}^2(K^*): A>0\},\]
finite-dimensional convex separation (cf.~{\cite[Theorem~11.2]{Rockafellar}}) would give a nonzero linear functional vanishing on $\mathcal R(\calX)$ and nonnegative on $\mathcal{C}$. By duality, this gives a nonzero positive semidefinite $M\in\operatorname{Sym}^2(K)$ such that
\begin{equation}
\label{eq:M-annihilates-R}
  \langle M,\mathcal R(q)\rangle=0
  \qquad\text{for every }q\in\calX.
\end{equation}
By~\eqref{eq:R-Theta}, the continuous periodic function $x\mapsto\langle M,\Theta(x)\rangle$ has zero integral against every $q\in\calX$. Taking $q=\langle M,\Theta\rangle$ gives
\begin{equation}
\label{eq:M-Theta-zero}
  \langle M,\Theta(x)\rangle=0
  \qquad\text{for every }x\in\OmegaCell.
\end{equation}
As $M\in\operatorname{Sym}^2(K)$, we may write
\[
  M=\sum_{j=1}^m\mu_j\eta_j\otimes\eta_j, \qquad \mu_j>0, \quad {0\neq\eta_j\in K}.
\]
Let $Z=\{u(k_0,\cdot)=0\}$. If $Z=\varnothing$, then~\eqref{eq:rho-vanishes} and Lemma~\ref{lem:weighted-phase}, with the condition on $Z$ vacuous, yields $\eta_j=0$, which is a contradiction. Thus $Z\neq\varnothing$.

For $x\in Z$, since $u(k_0,x)=0$, the terms containing a factor $u$ vanish and
\[
  \rho_{\eta_j\eta_j}(x)
  =2|\eta_j\cdot\partial_ku(k_0,x)|^2,
  \qquad
  \rho_{\chi(\eta_j,\eta_j)}(x)=0.
\]
Consequently,
\begin{equation}
\label{eq:Theta-on-Z}
  \Theta_{\eta_j,\eta_j}(x)
  =2|\eta_j\cdot\partial_ku(k_0,x)|^2,
  \qquad x\in Z.
\end{equation}
Substituting~\eqref{eq:Theta-on-Z} into~\eqref{eq:M-Theta-zero} yields
\[
  \eta_j\cdot\partial_ku(k_0,x)=0 \qquad\text{for }x\in Z
\]
for every $j$. Together with~\eqref{eq:rho-vanishes} and $\eta_j\in\ker d_k^2E(k_0)$, Lemma~\ref{lem:weighted-phase} implies $\eta_j=0$. This gives a contradiction. 
\end{proof}

With Lemma~\ref{lem:second-order-direction}, we can state the main second-order perturbation proposition.
\begin{proposition}
\label{prop:second-order-minimum}
Assume that $K\neq\{0\}$, and let $q\in\calX$ satisfy
\begin{equation}
\label{eq:second-order-assumptions}
  dF_q(k_0)|_K=0,
  \qquad
  \mathcal R(q)>0\quad\text{on }K.
\end{equation}
There is a neighborhood $U_0$ of $k_0$ and $t_0>0$ such that for every $0<t<t_0$ the function
\[
  k\longmapsto E(V+tq,k)
\]
attains its minimum over $\overline{U_0}$ at exactly one point $k_t\in U_0$ with
\begin{equation}
\label{eq:second-order-positive-Hessian}
  d_k^2E(V+tq,k_t)>0.
\end{equation}
For each $t\in(0,t_0)$, there exists $\delta_t>0$ such that, if $W\in\calV$ and
\[
  \|W-(V+tq)\|_{L^\infty}<\delta_t,
\]
then $k\mapsto E(W,k)$ attains its minimum over $\overline{U_0}$ at exactly one point $k_W\in U_0$, and $d_k^2E(W,k_W)>0$.
\end{proposition}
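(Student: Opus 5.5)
The plan is to reduce to the function $g_t(z)=f_t(\psi(t,z),z)$ on a small ball $\overline B_K$, exactly as in the proof of Proposition~\ref{prop:first-order-corank}. First I prove strict convexity of $g_t$ near $0$ and a coercivity bound away from $0$. Then I lift back to $k$ using the Schur complement factorization, and conclude with a stability argument in $W$. Throughout, $U_0=\{k_0+y+z:y\in B_Y,\ z\in B_K\}$ with balls small enough that $\partial_y^2f_t>0$ on $\overline B_Y\times\overline B_K$ for $|t|<t_0$.

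\textbf{Step 1: structure of $g_0$.} Since $\partial_y^2f_0>0$ on the convex ball $\overline B_Y$ and $\psi(0,z)$ is interior, $g_0(z)=\min_{y\in\overline B_Y}f_0(y,z)$. Because $k_0$ is a global minimum of $E$, we get $g_0(z)\ge g_0(0)$. By the Schur complement computation and \eqref{eq:Dzpsi-zero}, $d_zg_0(0)=0$ and $d_z^2g_0(0)=0$. A minimum with vanishing Hessian forces the cubic term to vanish, so $d_z^3g_0(0)=0$. Hence
\begin{equation*}
  \|d_z^2g_0(z)\|\le C|z|^2 .
\end{equation*}

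\textbf{Step 2: Taylor expansion in $t$.} Put $h(z)=F_q(k_0+\psi(0,z)+z)$. By \eqref{eq:dt0gt}, $\partial_t|_{t=0}g_t=h$. From \eqref{eq:Dzpsi-zero} and \eqref{eq:second-order-assumptions} we have $dh(0)=dF_q(k_0)|_K=0$. Lemma~\ref{lem:second-order-correction} gives $d^2h(0)=\mathcal R(q)\ge\lambda>0$. Using joint analyticity in $(t,k)$ on the finite-dimensional family $V+tq$, the remainder $g_t-g_0-th$ is $O(t^2)$ in $C^2(\overline B_K)$. This yields two estimates:
\begin{equation*}
  g_t(z)-g_t(0)\ \ge\ t\bigl(\tfrac{\lambda}{2}|z|^2-C|z|^3\bigr)-Ct^2|z|,
  \qquad
  d_z^2g_t(z)\ \ge\ t\lambda-C|z|^2-Ct|z|-Ct^2 .
\end{equation*}
For the first, expand $g_0(z)-g_0(0)\ge0$ and $h(z)-h(0)$, and use that the $t^2$-remainder minus its value at $0$ is $O(t^2|z|)$.

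\textbf{Step 3: localization.} Choose $c$ small so that $d_z^2g_t>0$ on $|z|\le c\sqrt t$; this holds for small $t$ by the second estimate. Shrink $B_K$ to radius $r$ with $Cr\le\lambda/4$. Then on $c\sqrt t\le|z|\le r$ the first estimate gives
\begin{equation*}
  g_t(z)-g_t(0)\ \ge\ |z|\,t\bigl(\tfrac{\lambda}{4}c\sqrt t-Ct\bigr)>0 .
\end{equation*}
So every minimizer of $g_t$ on $\overline B_K$ lies in the strictly convex ball $|z|<c\sqrt t$, and it is therefore unique. Call it $z_t$, and set $k_t=k_0+\psi(t,z_t)+z_t$. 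Since $\min_{\overline{U_0}}E(V+tq,\cdot)=\min_z g_t(z)$, the point $k_t$ is the unique minimizer over $\overline{U_0}$. Its Hessian is positive by the factorization in the proof of Proposition~\ref{prop:first-order-corank}, because both $A=\partial_y^2f_t>0$ and $d_z^2g_t(z_t)>0$.

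\textbf{Step 4: stability.} Fix $t\in(0,t_0)$. The two strict inequalities above are open conditions in $C^2$ over compact sets: positivity of $d_z^2g$ on $|z|\le c\sqrt t$, and $g(z)>g(0)$ on the annulus. The same holds for $\partial_y^2f>0$ and for the rank-one property of \eqref{eq:Riesz-projection}. The resolvent formulas give $E(W,\cdot)$ and its $k$-derivatives depending continuously on $W$ in $L^\infty$, uniformly on $\overline{U_0}$. So all these conditions persist for $\|W-(V+tq)\|_{L^\infty}<\delta_t$, and the argument of Step~3 applies verbatim.

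\textbf{Main obstacle.} The delicate point is Step~2. The error terms must be uniform, and the $O(t^2)$ gradient drift of $g_t$ at $0$ must be balanced against the $t|z|^2$ gain; this is why the radius is taken of order $\sqrt t$. The fact $d_z^3g_0(0)=0$, coming from minimality, is essential here.
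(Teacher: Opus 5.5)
Your proof is correct and follows essentially the same route as the paper. Both arguments reduce to $g_t$ via the implicit function theorem, expand $g_t=g_0+tG_q+t^2S$, use $g_0\ge g_0(0)$ and $d_z^3g_0(0)=0$, get strict convexity on a small ball containing all minimizers, and finish with the Schur complement and persistence of strict inequalities on compact sets.

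The only difference is the localization scale. The paper first shows $|z_t|\le C_1t$ by comparing with $g_t(0)$ and then checks convexity on that ball. You instead prove convexity on the larger ball $|z|\le c\sqrt t$ and exclude the annulus $c\sqrt t\le |z|\le r$, which makes the stability step slightly more explicit. Note that your ``main obstacle'' remark misattributes the $\sqrt t$ scale. It is dictated by the Hessian error $C|z|^2$ against $t\lambda$, whereas the drift-versus-gain balance gives the smaller $O(t)$ scale.
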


\begin{proof}
Let
\begin{equation}
\label{eq:Gq}
  G_q(z)=\left.\partial_t\right|_{t=0}g_t(z).
\end{equation}
The first condition in~\eqref{eq:second-order-assumptions}, \eqref{eq:dt0gt} and~\eqref{eq:variation-reduced-Hessian} give
\begin{equation}
\label{eq:Gq-derivatives}
  dG_q(0)=0,
  \qquad
  d^2G_q(0)=\mathcal R(q)>0.
\end{equation}
Choose a closed ball ${\overline B}\subset K$ centered at zero such that
\begin{equation}
\label{eq:g0-Gq-bounds}
  g_0(z)\geq g_0(0),
  \qquad
  G_q(z)-G_q(0)\geq c|z|^2,
  \qquad z\in\overline B,
\end{equation}
for some $c>0$. We may shrink the $Y$-neighborhood so that $\partial_y^2f_t$ is uniformly positive definite. Then, for each $z\in\overline B$ and small $t$, the point $y=\psi(t,z)$ is the unique minimum of $y\mapsto f_t(y,z)$ in that neighborhood. After shrinking again, minima of $E(V+tq,\cdot)$ over a fixed product neighborhood $U_0$ correspond exactly to minima of $g_t$ over $\overline B$.

Joint analyticity gives
\begin{equation}
\label{eq:gt-expansion}
  g_t(z)=g_0(z)+tG_q(z)+t^2S(t,z)
\end{equation}
uniformly in $C^2(\overline B)$. By~\eqref{eq:g0-Gq-bounds}, the value of $g_t$ on $\partial B$ is larger than $g_t(0)$ for all sufficiently small $t>0$. Thus every minimizer lies in $B$. Let $z_t$ be a minimizer. Since $S$ is uniformly Lipschitz in $z$,
\[
  |S(t,z)-S(t,0)|\leq C_S|z|.
\]
Comparing $g_t(z_t)$ with $g_t(0)$ in~\eqref{eq:gt-expansion} gives
\[
  0\geq g_t(z_t)-g_t(0)
  \geq ct|z_t|^2-C_St^2|z_t|.
\]
Therefore
\begin{equation}
\label{eq:zt-Ot}
  |z_t|\leq C_1t
\end{equation}
for a constant $C_1$ independent of $t$. Since $K=\ker d_k^2E(k_0)$, we have $d_z^2g_0(0)=0.$ 
For each $v\in K$, the function $s\mapsto g_0(sv)$ has a minimum at zero and has vanishing second derivative there. Its third derivative at zero is therefore zero. Polarization gives
\[
  d_z^3g_0(0)=0,
\]
and hence
\begin{equation}
\label{eq:g0-Hessian-quadratic}
  d_z^2g_0(z)=O(|z|^2).
\end{equation}
Using~\eqref{eq:Gq-derivatives},~\eqref{eq:gt-expansion}, and~\eqref{eq:g0-Hessian-quadratic}, we obtain uniformly for $|z|\leq C_1t$,
\begin{equation}
\label{eq:gt-Hessian-positive}
  d_z^2g_t(z)
  =t\mathcal R(q)+O(t^2)>0
\end{equation}
for $t$ sufficiently small. The ball $\{|z|\leq C_1t\}$ is convex and contains every minimizer by~\eqref{eq:zt-Ot}; thus~\eqref{eq:gt-Hessian-positive} implies uniqueness of $z_t$. The Schur complement argument similar to Proposition~\ref{prop:first-order-corank} gives~\eqref{eq:second-order-positive-Hessian}.

For fixed $t>0$, the rank-one spectral projection \eqref{eq:Riesz-projection}, the strict boundary inequality $g_t\rvert_{\partial B} > g_t(0)$, uniqueness of the minimum, and the positive lower bound in~\eqref{eq:second-order-positive-Hessian} persist under a sufficiently small perturbation of the potential. This proves the last assertion.
\end{proof}

\section{Generic nondegeneracy for a single band}
\label{sec:generic-band}

For $n\geq1$, define
\begin{equation}
\label{eq:Snmin}
  \mathcal S_n^{\min}
  =\bigl\{V\in\calX:
      E_n(V,k)\text{ is a simple eigenvalue of }H_V(k)
      \text{ for every }k\in\calM_n(V)\bigr\}.
\end{equation}

\begin{lemma}
\label{lem:simple-min-open}
The set $\mathcal S_n^{\min}$ is open in $\calX$.
\end{lemma}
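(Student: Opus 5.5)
We argue by contradiction using compactness of $\T_*^d$ and continuity of the band functions in the potential. The basic input is the standard Lipschitz bound
\[
  |E_j(W,k)-E_j(V,k)|\leq\|W-V\|_{L^\infty}
  \qquad\text{for all }j\text{ and }k,
\]
which follows from the min–max principle because $H_W(k)-H_V(k)=W-V$ is a bounded multiplication operator. A second input is that, for fixed $V$, each $E_j(V,\cdot)$ is continuous on $\T_*^d$. This holds because $k\mapsto H_V(k)$ is a holomorphic family of type (A) with compact resolvent, so the ordered eigenvalues depend continuously on $k$. Combining the two bounds, the map $(W,k)\mapsto E_j(W,k)$ is jointly continuous on $\calX\times\T_*^d$.

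Fix $V\in\mathcal S_n^{\min}$ and suppose, for contradiction, that there are $W_m\to V$ in $\calX$ with $W_m\notin\mathcal S_n^{\min}$. Then for each $m$ there is $k_m\in\calM_n(W_m)$ at which $E_n(W_m,k_m)$ is not simple. This means
\[
  E_{n-1}(W_m,k_m)=E_n(W_m,k_m)
  \qquad\text{or}\qquad
  E_n(W_m,k_m)=E_{n+1}(W_m,k_m).
\]
After passing to a subsequence, we may assume that $k_m\to k_*\in\T_*^d$ and that the same alternative holds for every $m$.

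Next we show that $k_*\in\calM_n(V)$. The uniform bound gives $\min_k E_n(W_m,k)\to\min_k E_n(V,k)$. By joint continuity, $E_n(W_m,k_m)\to E_n(V,k_*)$. Since $E_n(W_m,k_m)=\min_k E_n(W_m,k)$, it follows that $E_n(V,k_*)=\min_k E_n(V,k)$, so $k_*$ is a minimum point of the $n$-th band for $V$.

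Finally we pass to the limit in the coincidence. Joint continuity applied to $E_{n\pm1}$ turns the equality above into $E_{n-1}(V,k_*)=E_n(V,k_*)$ or $E_n(V,k_*)=E_{n+1}(V,k_*)$. Either way, $E_n(V,k_*)$ is a multiple eigenvalue of $H_V(k_*)$. This contradicts $V\in\mathcal S_n^{\min}$, so $\mathcal S_n^{\min}$ is open.

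The only step that needs care is the joint continuity in $(W,k)$. The min–max bound is uniform in $k$, so the step reduces to continuity of $k\mapsto E_j(V,k)$ for the single fixed potential $V$, which is standard. No minimizing sequence can escape, because $\T_*^d$ is compact.
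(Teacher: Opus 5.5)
Your proof is correct, and it reaches openness by a somewhat different mechanism than the paper.

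The paper argues directly. It covers the compact set $\calM_n(V)$ by finitely many coordinate patches $U_{k_j}$, each carrying a Riesz contour $\gamma_{k_j}$ as in \eqref{eq:Riesz-projection}. It then uses the min--max bound \eqref{eq:minmax-Linfty} together with a uniform gap $E_n(V,\cdot)\ge m_n(V)+3\delta$ off the cover to trap $\calM_n(W)$ inside that cover. Simplicity at points of $\calM_n(W)$ then comes from persistence of the rank-one Riesz projections under analytic perturbation theory.

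You argue by contradiction along sequences. You encode simplicity as the strict inequalities $E_{n-1}<E_n<E_{n+1}$ between ordered eigenvalues. The only analytic input is joint continuity of $(W,k)\mapsto E_j(W,k)$, which you get from the min--max bound in $W$, uniform in $k$, plus continuity in $k$ for fixed $V$. With that, both the minimizing property and the eigenvalue coincidence pass to the limit $k_m\to k_*$.

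What your route buys: it is more elementary. No contours are needed, and you never have to track which ordered eigenvalue sits inside a given contour for $W$ near $V$.

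What the paper's route buys: it is direct rather than by contradiction, with an explicit trapping radius $\delta$. It also sets up the local contour framework (fixed $\gamma$, $U$, $\calV_0$) that is reused in Lemma~\ref{lem:local-corank}.

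For $n=1$ only the alternative $E_1=E_2$ arises, and your argument covers that case unchanged.
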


\begin{proof}
Fix $V\in\mathcal S_n^{\min}$ and define
\[
  m_n(V)=\min_{k\in\T_*^d}E_n(V,k).
\]
For each $k\in\calM_n(V)$, choose a coordinate neighborhood $U_k$ and a circle $\gamma_k$ as in~\eqref{eq:Riesz-projection}. Compactness of $\calM_n(V)$ gives a finite subcover $U_{k_1},\ldots,U_{k_N}$. Their union $U$ contains $\calM_n(V)$, and there exists $\delta>0$ such that
\begin{equation}
\label{eq:exterior-value-gap}
  E_n(V,k)\geq m_n(V)+3\delta,
  \qquad k\in\T_*^d\setminus U.
\end{equation}
The min--max principle gives
\begin{equation}
\label{eq:minmax-Linfty}
  \sup_{k\in\T_*^d}|E_n(W,k)-E_n(V,k)|
  \leq\|W-V\|_{L^\infty}.
\end{equation}
If $\|W-V\|_{L^\infty}<\delta$, then~\eqref{eq:exterior-value-gap}--\eqref{eq:minmax-Linfty} imply that every point of $\calM_n(W)$ lies in $U$. The contours $\gamma_{k_j}$ remain in the resolvent sets for $W$ close to $V$, so the eigenvalue $E_n(W,k)$ is simple at every $k\in\calM_n(W)$ by analytic perturbation theory. 
\end{proof}

For $V\in\mathcal S_n^{\min}$ define
\begin{equation}
\label{eq:Rn}
  R_n(V) \coloneqq \max_{k\in\calM_n(V)}\dim\ker d_k^2E_n(V,k).
\end{equation}
We use the following notation at a pair $(W,k)$ with $W\in\mathcal S_n^{\min}$ and $k\in\calM_n(W)$. Let $E_{W}(\kappa)$ and $u_{W}(\kappa)$ be defined as in~\eqref{eq:local-energy} and \eqref{eq:eigenvalue-equation} near $\kappa=k$. Write 
\begin{equation}
\label{eq:varying-K-rho}
  K(W,k)=\ker d_\kappa^2E_{W}(k),
  \qquad
  \rho_{W,k,\eta}
  =(\eta\cdot\partial_\kappa)|u_{W}(k,\cdot)|^2,
  \quad \eta\in K(W,k).
\end{equation}
For $q\in\calX$, let $F_{W,k,q}$ be the function in~\eqref{eq:FH-potential} formed with $u_{W}$, and let
\begin{equation}
\label{eq:varying-R}
  \mathcal R_{W,k}(q)\in\operatorname{Sym}^2(K(W,k)^*)
\end{equation}
be the form defined by~\eqref{eq:kappa-R}. These quantities do not depend on the phase of $u_{W}$.

\begin{lemma}
\label{lem:local-corank}
Let $V\in\mathcal S_n^{\min}$, and let $r=R_n(V)>0$ as in \eqref{eq:Rn}, and let $k_0\in\calM_n(V)$ satisfy
\[
  \dim\ker d_k^2E_n(V,k_0)=r.
\]
There exist a compact coordinate neighborhood $Q$ of $k_0$, with
$k_0\in\operatorname{int}Q$, and an open neighborhood
$\calV_0\subset\mathcal S_n^{\min}$ of $V$ such that the set
\begin{equation}
\label{eq:GQ}
  \calG_Q
  =\bigl\{W\in\calV_0:
       \dim\ker d_k^2E_n(W,k)\leq r-1
       \text{ for every }k\in\calM_n(W)\cap Q\bigr\}
\end{equation}
is open and dense in $\calV_0$.
\end{lemma}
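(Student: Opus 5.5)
We prove openness and density separately, using Propositions~\ref{prop:first-order-corank} and~\ref{prop:second-order-minimum} for density and a compactness argument for openness. First we fix $Q$ and $\calV_0$. Take $U$ and the contour $\gamma$ around $E_n(V,k_0)$ as in Section~\ref{sec:local}, and choose $Q\subset U$ a small closed ball around $k_0$. Shrink $\calV_0\subset\mathcal S_n^{\min}$ (open by Lemma~\ref{lem:simple-min-open}) so that on $Q$ the local eigenvalue $E(W,\cdot)$ inside $\gamma$ coincides with $E_n(W,\cdot)$. By \eqref{eq:minmax-Linfty} and the argument of Lemma~\ref{lem:simple-min-open}, points of $\calM_n(W)$ for $W\in\calV_0$ lie near $\calM_n(V)$. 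By upper semicontinuity of the corank at points of $\calM_n(V)\cap Q$, after shrinking, every $k\in\calM_n(W)\cap Q$ has corank $\le r$. Here we use that $r=R_n(V)$ is the maximum, so nearby critical points cannot have corank larger than $r$.

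\textbf{Openness.} Let $W\in\calG_Q$. The set $\calM_n(W)\cap Q$ is compact, and at each of its points $d_k^2E_n(W,k)$ has corank $\le r-1$. By continuity of $(W',k)\mapsto d_k^2E(W',k)$ and lower semicontinuity of rank, there are a neighborhood $N$ of $\calM_n(W)\cap Q$ in $Q$ and a $\delta>0$ such that the corank is $\le r-1$ on $N$ for $\|W'-W\|_{L^\infty}<\delta$. Using \eqref{eq:minmax-Linfty} together with a positive gap of $E_n(W,\cdot)$ above $m_n(W)$ on $Q\setminus N$, we get $\calM_n(W')\cap Q\subset N$ for $W'$ close to $W$. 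Hence $W'\in\calG_Q$. A subtlety is that a minimum of $W$ just outside $Q$ could move into $Q$. I would handle this by proving the statement for a slightly larger compact $Q'\supset Q$, where the same argument applies, or by choosing $Q$ so that $\partial Q$ is treated via a limiting argument. This point needs care.

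\textbf{Density.} Fix $W\in\calV_0$ and $\varepsilon>0$; we need $W'\in\calG_Q$ with $\|W'-W\|<\varepsilon$. If some point of $\calM_n(W)\cap Q$ has corank $r$, we cannot directly apply the propositions at a single point, because there may be infinitely many such minima. Instead we work at the base point $(V,k_0)$ uniformly, using the uniform statement of Proposition~\ref{prop:first-order-corank}. We split into cases according to whether $\rho_\eta\not\equiv0$ for some $\eta\in K$.

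In the case $\rho_\eta\not\equiv0$ for some $\eta\in K$, choose $q$ with $dF_q(k_0)|_K\neq0$. Proposition~\ref{prop:first-order-corank} gives $U_0$ and a set $\calG_{W,\varepsilon_0}$ of full measure. Any $a\in\calG_{W,\varepsilon_0}$ with $|a|\,\|q\|<\varepsilon$ yields $W+aq$ with corank $\le r-1$ at all critical points in $U_0$ where the Hessian is $\ge0$, which includes all global minima in $U_0$. Taking $Q\subset U_0$ from the start gives density.

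In the case \eqref{eq:rho-vanishes}, Lemma~\ref{lem:second-order-direction} gives $q$ with $\mathcal R(q)>0$ and $dF_q(k_0)|_K=0$. Proposition~\ref{prop:second-order-minimum} is stated only at $V$, not uniformly in $W$. So I would first reduce to this situation: corank $\ge r$ at some minimum is a closed-type condition, and near $W$ either $W$ already lies in $\calG_Q$, or we repeat the construction with $(W,k_W)$ in place of $(V,k_0)$. The difficulty is that $W$ may have many corank-$r$ minima inside $Q$, whereas the proposition at $(W,k_W)$ controls only a neighborhood of $k_W$.

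\textbf{Main obstacle.} The main obstacle is this uniformity. To deal with it I would first try the following. Apply Proposition~\ref{prop:second-order-minimum} at $(V,k_0)$ itself: for small $t$, every $W'$ within $\delta_t$ of $V+tq$ has a unique nondegenerate minimum of $E$ on $\overline{U_0}$, and this minimum lies in the interior. Hence every point of $\calM_n(W')\cap U_0$ is that single nondegenerate point. Points in $\calM_n(W')\cap U_0$ are minima over $\overline{U_0}$ whenever the set is nonempty, so this suffices.

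This shows that $\calG_Q$ contains balls accumulating at $V$, but it does not yet give density near arbitrary $W$. To upgrade, I would choose $\calV_0$ so small that each $W\in\calV_0$ either has corank $\le r-1$ on $\calM_n(W)\cap Q$, or has a point $k_W\in Q$ of corank $r$. In the latter case $K(W,k_W)$ is close to $K(V,k_0)$, the hypotheses persist with $q$ chosen uniformly by continuity of $\mathcal R_{W,k}$ and $\rho_{W,k,\eta}$, and we re-run Proposition~\ref{prop:second-order-minimum} at $(W,k_W)$ on a neighborhood containing $Q$. The quantitative radius of $U_0$ must then be checked to be uniform in $W$. This uniformity check is where I expect the real work to lie.
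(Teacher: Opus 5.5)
Your openness argument and your treatment of the case $\rho_{V,k_0,\eta}\not\equiv0$ are correct and match the paper. The boundary issue you flag in the openness step does not actually arise. Your gap estimate on the compact set $Q\setminus N$ already excludes minima of $W'$ in $Q\setminus N$, whatever happens outside $Q$; the paper says the same via a convergent subsequence $k_j\to k_*\in Q$.

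The genuine gap is density in the case \eqref{eq:rho-all-zero-at-base}. You leave it open, and the route you sketch would not close it, for two reasons.

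\textbf{The hypotheses do not persist.} The hypotheses of Proposition~\ref{prop:second-order-minimum} need not hold at a nearby corank-$r$ minimum $(W,k_W)$. The condition $dF_{W,q}(k_W)|_{K(W,k_W)}=0$, like the vanishing of $\rho$, is closed, not open, so continuity only makes it small. What does persist is the strict inequality $\mathcal R_{W,k}(q)>0$, once $K(W,k)$ is identified with $K_0$ through the spectral projection of the Hessian onto $[0,c)$; this is \eqref{eq:R-persistence}.

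\textbf{A single-point neighborhood containing $Q$ fails.} Re-running Proposition~\ref{prop:second-order-minimum} at one point $k_W$ on a neighborhood containing $Q$ fails for a structural reason, not merely a uniformity one. Its proof needs $g_0\geq g_0(0)$ and $G_q(z)-G_q(0)\geq c|z|^2$ on the whole ball. Suppose $W$ has a second degenerate global minimum $k'\in Q$ with $F_{W,q}(k')<F_{W,q}(k_W)$. Then the minimum of $E(W+tq,\cdot)$ on that neighborhood migrates toward $k'$, where nothing controls the Hessian.

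The missing idea is a pointwise dichotomy along a single fixed direction, combined with compactness of $B_W$ from \eqref{eq:BW}. Keep the $q$ chosen at $(V,k_0)$ with $\mathcal R_{V,k_0}(q)>0$. Fix $W\in\calV_0$ and take each $k\in B_W$ in turn.
\begin{enumerate}[label=\textup{(\arabic*)},leftmargin=2.5em]
  \item If $dF_{W,q}(k)|_{K(W,k)}\neq0$, apply Proposition~\ref{prop:first-order-corank} with base point $(W,k)$, not $(V,k_0)$. This gives $U_k$ and an open dense set of good $s\in(0,\varepsilon_k)$.
  \item If $dF_{W,q}(k)|_{K(W,k)}=0$, then \eqref{eq:R-persistence} makes Proposition~\ref{prop:second-order-minimum} applicable at $(W,k)$. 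So every global minimum of $E_n(W+sq,\cdot)$ in $U_k$ is nondegenerate for all $s\in(0,\varepsilon_k)$.
\end{enumerate}
Cover $B_W$ by finitely many $U_{k_j}$ and intersect the parameter sets. Then use \eqref{eq:minmax-Linfty}, $C^2$ convergence of the local eigenvalues, and the bound ``corank $\leq r$'' near $(V,k_0)$. These show that for small $s>0$ every corank-$r$ global minimum of $W+sq$ in $Q$ would lie in $\bigcup_jU_{k_j}$, so none exists. Hence $W+sq\in\calG_Q$ for arbitrarily small good $s$. The neighborhoods $U_{k_j}$ depend on $W$, which is harmless: density is verified one $W$ at a time, so no uniformity in $W$ is needed.
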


\begin{proof}
Write $K_0=K(V,k_0)$ defined in \eqref{eq:varying-K-rho}. Shrink a coordinate neighborhood $U$ of $k_0$ and a neighborhood $\calV_0$ of $V$ so that $\calV_0\subset\mathcal S_n^{\min}$ and
\begin{enumerate}[label=\textup{(\alph*)},leftmargin=3em]
  \item for $(W,k)\in\calV_0\times U$, the ordered eigenvalue $E_n(W,k)$ is the unique eigenvalue inside one fixed contour $\gamma$;
  \item if $k\in\calM_n(W)\cap U$, then
  \[
    \dim K(W,k)\leq r = R_n(V).
  \]
\end{enumerate}
Note that the second property follows from upper semicontinuity of the Hessian corank. We first prove the density of $\calG_Q$ in $\calV_0$. 

\noindent
\emph{Case 1.} If $\rho_{V,k_0,\eta}\not\equiv0$ for some $\eta\in K_0$, we choose $q=\rho_{V,k_0,\eta}$. Then
\[
  dF_{V,q}(k_0) \cdot \eta >0.
\]
After shrinking $U$ and $\calV_0$, Proposition~\ref{prop:first-order-corank} applies to every potential $W\in\calV_0$, with the same direction $q$ and the same coordinate splitting at $(V,k_0)$. Let $Q\Subset U$ contain $k_0$ in its interior. For every $W\in\calV_0$ and every neighborhood $\mathcal W$ of $W$, Proposition~\ref{prop:first-order-corank} gives $a$ arbitrarily close to zero such that $W+aq\in\mathcal W\cap\calG_Q\cap\calV_0$. Thus $\calG_Q$ is dense. 

\noindent
\emph{Case 2.} If 
\begin{equation}
\label{eq:rho-all-zero-at-base}
  \rho_{V,k_0,\eta}=0
  \qquad\text{for every }\eta\in K_0.
\end{equation}
By Lemma~\ref{lem:second-order-direction}, we may choose $q\in\calX$ such that
\begin{equation}
\label{eq:R-base-positive}
  \mathcal R_{V,k_0}(q)>0
  \qquad\text{on }K_0.
\end{equation}
We may shrink $U$ and $\calV_0$ so that
\begin{equation}
\label{eq:R-persistence}
  \left.
  \begin{gathered}
    W\in\calV_0,\quad k\in\calM_n(W)\cap U,\\
    \dim K(W,k)=r,\quad
    dF_{W,q}(k)|_{K(W,k)}=0
  \end{gathered}
  \right\}
  \Longrightarrow
  \mathcal R_{W,k}(q)>0\text{ on }K(W,k).
\end{equation}
To see~\eqref{eq:R-persistence}, choose $c>0$ smaller than every positive eigenvalue of $d_k^2E_n(V,k_0)$. For $(W,k)$ close to $(V,k_0)$, the spectral projection of the Hessian onto $[0,c)$ is continuous and has rank $r$ whenever the Hessian corank is $r$; its range is then $K(W,k)$. The energy $E(W,k)$ in~\eqref{eq:local-energy}, its first three $k$-derivatives, and the first two $k$-derivatives of $F_{W,q}$ depend continuously on $(W,k)$. By equation \eqref{eq:R-Theta} and under the identification by these spectral projections, $\mathcal R_{W,k}(q)$ converges to $\mathcal R_{V,k_0}(q)$. Hence, \eqref{eq:R-base-positive} implies~\eqref{eq:R-persistence}. 

Choose $Q\Subset U$ with $k_0\in\operatorname{int}Q$. Fix $W\in\calV_0$ and set
\begin{equation}
\label{eq:BW}
  B_W
  =\{k\in\calM_n(W)\cap Q:\dim K(W,k)=r\}.
\end{equation}
The set $B_W$ is compact. If $B_W=\varnothing$, then $W\in\calG_Q$. Suppose that $B_W\neq\varnothing$.

For each $k\in B_W$, take the fixed direction $q$ and the base point $k$. If
\begin{equation}
\label{eq:first-order-at-Wk}
  dF_{W,q}(k)|_{K(W,k)}\neq0,
\end{equation}
then Proposition~\ref{prop:first-order-corank} gives a neighborhood $U_k$ of $k$ and an open dense subset $G_k\subset(0,\varepsilon_k)$ such that, for $s\in G_k$, every point $\kappa\in U_k$ satisfying
\[
  d_\kappa E_n(W+sq,\kappa)=0,
  \qquad
  d_\kappa^2E_n(W+sq,\kappa)\geq0
\]
has Hessian corank at most $r-1$. If~\eqref{eq:first-order-at-Wk} fails, then~\eqref{eq:R-persistence} and Proposition~\ref{prop:second-order-minimum} give a neighborhood $U_k$ and $\varepsilon_k>0$ such that every minimum in $U_k$ has Hessian corank at most $r-1$ for every $s\in(0,\varepsilon_k)$.

Choose $k_1,\ldots,k_m\in B_W$ such that
\begin{equation}
\label{eq:BW-cover}
  B_W\subset\bigcup_{j=1}^mU_{k_j}.
\end{equation}
After replacing all $\varepsilon_{k_j}$ by their minimum, the set
\begin{equation}
\label{eq:common-s-set}
  G=\bigcap_{j=1}^mG_{k_j}
\end{equation}
is open and dense in an interval $(0,\varepsilon)$; in the second-order case we take $G_{k_j}=(0,\varepsilon)$, where we may shrink $\varepsilon$ so that $W+sq\in\calV_0$ for every $s\in(0,\varepsilon)$.

For all sufficiently small $s>0$, every point of
\[
  \{\kappa\in\calM_n(W+sq)\cap Q:\dim K(W+sq,\kappa)=r\}
\]
belongs to the union in~\eqref{eq:BW-cover}. Otherwise there exist $s_j\to 0$ and $\kappa_j\in Q\setminus\bigcup_iU_{k_i}$ such that $\kappa_j\in\calM_n(W+s_jq)$ and $\dim K(W+s_jq,\kappa_j)=r$. Passing to a subsequence gives $\kappa_j\to\kappa\in Q$. The estimate~\eqref{eq:minmax-Linfty} gives $\kappa\in\calM_n(W)$. The local eigenvalues converge in $C^2$, so upper semicontinuity gives $\dim K(W,\kappa)\geq r$. Property~\textup{(b)} gives equality; hence $\kappa\in B_W$, contradicting~\eqref{eq:BW-cover}.

Choose $s\in G$ arbitrarily small. Then $W+sq\in\calG_Q$, which proves the density in Case 2.

It remains to show that $\calG_Q$ is open. Suppose $W_j\to W_*$ in $\calV_0$, $W_*\in\calG_Q$, and $W_j\notin\calG_Q$. Choose
\[
  k_j\in\calM_n(W_j)\cap Q,
  \qquad
  \dim K(W_j,k_j)\geq r.
\]
After passing to a subsequence, $k_j\to k_*\in Q$. The estimate~\eqref{eq:minmax-Linfty} gives $k_*\in\calM_n(W_*)$, and $C^2$ convergence of the local eigenvalues gives
\[
  \dim K(W_*,k_*)\geq r,
\]
contradicting $W_*\in\calG_Q$. Thus $\calG_Q$ is open.
\end{proof}

\begin{proposition}
\label{prop:fixed-band-nondegenerate}
For every $n\geq1$, the set
\begin{equation}
\label{eq:Nnmin}
  \mathcal N_n^{\min}
  =\bigl\{V\in\mathcal S_n^{\min}:
      d_k^2E_n(V,k)>0
      \text{ for every }k\in\calM_n(V)\bigr\}
\end{equation}
is open and dense in $\mathcal S_n^{\min}$.
\end{proposition}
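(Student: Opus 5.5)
Openness and density are proved separately, with density by a finite induction on the corank $R_n$ using Lemma~\ref{lem:local-corank}.

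\emph{Openness.} Take $V\in\mathcal N_n^{\min}$. As in the proof of Lemma~\ref{lem:simple-min-open}, cover the compact set $\calM_n(V)$ by finitely many coordinate neighborhoods $U_{k_j}$ on which $E_n$ is a simple, locally analytic eigenvalue. Suppose, for contradiction, that $W_i\to V$ in $L^\infty$ with $W_i\notin\mathcal N_n^{\min}$. Then there are $\kappa_i\in\calM_n(W_i)$ with degenerate Hessian. By \eqref{eq:minmax-Linfty} and compactness we may pass to a subsequence with $\kappa_i\to\kappa\in\calM_n(V)$. The local eigenvalues converge in $C^2$, so upper semicontinuity of the corank gives $\dim\ker d_k^2E_n(V,\kappa)\ge1$, which is a contradiction. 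This is exactly the openness argument at the end of Lemma~\ref{lem:local-corank}, applied with $r=1$ and $Q=\T_*^d$.

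\emph{Density.} It suffices to show that every $V\in\mathcal S_n^{\min}\setminus\mathcal N_n^{\min}$ can be approximated by potentials $W$ with $R_n(W)\le R_n(V)-1$. Since $R_n\le d$, iterating this at most $d$ times reaches $R_n=0$, i.e.\ $\mathcal N_n^{\min}$. Because $\mathcal S_n^{\min}$ is open (Lemma~\ref{lem:simple-min-open}), all approximants can be kept inside $\mathcal S_n^{\min}$.

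To lower the corank, set $r=R_n(V)>0$. First I claim that for every $W$ near $V$ and every $k\in\calM_n(W)$ we have $\dim K(W,k)\le r$. This follows from the same compactness, \eqref{eq:minmax-Linfty} and $C^2$-convergence argument, which places any such $k$ near a point of $\calM_n(V)$, where the corank is at most $r$. Next, for each $k_0\in\calM_n(V)$ with corank exactly $r$, Lemma~\ref{lem:local-corank} gives a compact neighborhood $Q_{k_0}$ and an open neighborhood $\calV_{k_0}$ of $V$ on which $\calG_{Q_{k_0}}$ is open and dense. The set $B=\{k\in\calM_n(V):\dim\ker d_k^2E_n(V,k)=r\}$ is compact by upper semicontinuity, so finitely many interiors $\operatorname{int}Q_{k_1},\dots,\operatorname{int}Q_{k_m}$ cover it. Let $\calV'=\bigcap_j\calV_{k_j}$. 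A finite intersection of open dense subsets of $\calV'$ is open and dense, so there are $W$ arbitrarily close to $V$ lying in every $\calG_{Q_{k_j}}$.

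The remaining step, which I expect to be the main obstacle, is to ensure that such $W$ has no corank-$r$ minimum outside $\bigcup_j Q_{k_j}$. I will handle it by another compactness argument, shrinking $\calV'$ first. If $W_i\to V$ had $\kappa_i\in\calM_n(W_i)$ of corank $r$ outside $\bigcup_j\operatorname{int}Q_{k_j}$, then a limit point $\kappa$ would be a minimizer for $V$ of corank $\ge r$, hence in $B$, but also outside the open cover of $B$, a contradiction. With $\calV'$ shrunk this way, every $W\in\bigcap_j\calG_{Q_{k_j}}$ satisfies $R_n(W)\le r-1$. This completes the induction step, and hence the density of $\mathcal N_n^{\min}$ in $\mathcal S_n^{\min}$.
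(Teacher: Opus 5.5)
Your proposal is correct and follows essentially the same route as the paper. Density goes by induction on $R_n$: you apply Lemma~\ref{lem:local-corank} at finitely many points of the compact maximal-corank set, and a compactness/upper-semicontinuity argument rules out maximal-corank minima outside $\bigcup_j\operatorname{int}Q_{k_j}$. The differences are cosmetic: the paper proves openness directly via uniformly positive Hessians on disjoint neighborhoods of the finitely many minima (rather than your sequential contradiction argument), and it phrases the induction with nested open sets $\mathcal U\supset\mathcal U_1\supset\cdots$ rather than successive approximants.
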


\begin{proof}
We first show openness. Fix $V\in\mathcal N_n^{\min}$. $\calM_n(V)$ is finite and each point of $\calM_n(V)$ is an isolated minimum. Choose pairwise disjoint coordinate neighborhoods $U_1,\ldots,U_N$ of its points such that the local eigenvalues have Hessian bounded below by a positive constant on each $U_j$. The openness follows from the continuity of the Hessian and the eigenvalues $E_n(W,\cdot)$ with respect to the perturbations in $W$.  

We now prove density. Let $\mathcal U\subset\mathcal S_n^{\min}$ be nonempty and open. Choose $V_0\in\mathcal U$ and put $r_0=R_n(V_0)$. If $r_0=0$, then $V_0\in\mathcal N_n^{\min}$. Assume $r_0>0$, and define
\begin{equation}
\label{eq:max-corank-set}
  B_{r_0}(V_0)
  =\{k\in\calM_n(V_0):\dim\ker d_k^2E_n(V_0,k)=r_0\}.
\end{equation}
This set is compact. Apply Lemma~\ref{lem:local-corank} at each point of $B_{r_0}(V_0)$. Choose finitely many compact neighborhoods $Q_1,\ldots,Q_N$ whose interiors cover $B_{r_0}(V_0)$, and choose an open neighborhood $\calV_0\subset\mathcal U$ of $V_0$ on which Lemma~\ref{lem:local-corank} holds.

After shrinking $\calV_0$, every $W\in\calV_0$ and every
\[
  k\in\calM_n(W)\setminus\bigcup_{j=1}^N\operatorname{int}Q_j
\]
satisfy
\begin{equation}
\label{eq:exterior-corank}
  \dim\ker d_k^2E_n(W,k)\leq r_0-1.
\end{equation}
Indeed, otherwise there exist $W_j\to V_0$ and $k_j$ outside the union with Hessian corank at least $r_0$. A subsequence converges to $k\in\calM_n(V_0)$ with corank at least $r_0$, so $k\in B_{r_0}(V_0)$, contradicting the choice of the interiors.

For each $j$, Lemma~\ref{lem:local-corank} gives an open dense subset of $\calV_0$ on which every global minimum in $Q_j$ has Hessian corank at most $r_0-1$, whose finite intersection is again open and dense. Together with~\eqref{eq:exterior-corank}, it contains a nonempty open set $\mathcal U_1\subset\mathcal U$ such that
\begin{equation}
\label{eq:R-drop}
  R_n(W)\leq r_0-1,
  \qquad W\in\mathcal U_1.
\end{equation}
Choose $V_1\in\mathcal U_1$. If $R_n(V_1)=0$, then $V_1\in\mathcal N_n^{\min}$. Otherwise, repeat the construction inside $\mathcal U_1$. At each induction, $R_n$ decreases by at least one. The induction ends after at most $d$ steps, yielding a potential in $\mathcal U\cap\mathcal N_n^{\min}$. This proves density.
\end{proof}

\section{Proof of Theorem \ref{thm:main}}
\label{sec:gap-genericity}

We take intervals $I=[r,s]$ with rational endpoints $r,s\in\mathbb Q.$ Define $\calV_I\subset\calX$ to be the set of $V$ for which $I$ is contained in a bounded connected component of $\R\setminus\spec H_V$. For $V\in\calV_I$, denote this component by $(a_I(V),b_I(V))$. Define $\calD_I\subset\calV_I$ by the following condition: $V \in \calD_I$ if there exist unique indices $n_-(V),n_+(V)$ such that
\begin{equation}
\label{eq:unique-endpoint-bands}
  a_I(V)\in E_{n_-(V)}(V,\T_*^d),
  \qquad
  b_I(V)\in E_{n_+(V)}(V,\T_*^d).
\end{equation}
Note that the condition~\eqref{eq:unique-endpoint-bands} implies that each endpoint is a simple eigenvalue of $H_V(k)$ at every attaining quasimomentum $k$. 

\begin{lemma}
\label{lem:KR}
The set $\calV_I$ is open in $\calX$, and $\calD_I$ is open and dense relative to $\calV_I$.
\end{lemma}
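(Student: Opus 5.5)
Openness of $\calV_I$ follows from the Lipschitz dependence of the band functions on the potential. By \eqref{eq:minmax-Linfty}, $\sup_k|E_n(W,k)-E_n(V,k)|\le\|W-V\|_{L^\infty}$ for every $n$. Since $H_V\ge -\|V\|_{L^\infty}$ and $E_n(V,k)\to\infty$ uniformly in $k$ (Weyl law), only finitely many bands meet a bounded window around $I$, uniformly for $W$ near $V$. Let $\delta=\dist(I,\spec H_V)>0$. If $\|W-V\|_{L^\infty}<\delta/2$, then every band of $H_W$ stays at distance $>\delta/2$ from $I$. Moreover, since the bands below and above $I$ move by less than $\delta/2$, some spectrum of $H_W$ remains both below and above $I$. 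Hence $I$ still lies in a bounded gap, and $\calV_I$ is open.

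For $\calD_I$ I would first observe that the endpoint functions $a_I$ and $b_I$ are continuous on $\calV_I$. Indeed, $a_I(V)=\max_n\max_k\{E_n(V,k): E_n(V,k)<r\}$, and only finitely many $n$ are relevant near $V$; the same holds for $b_I$. The relevant bands are indexed by counting: $a_I(V)=\max_k E_{N}(V,k)$ where $N$ is the number of bands lying entirely below the gap, and $N$ is locally constant on $\calV_I$. So $n_-$ is automatically unique in the sense that $a_I(V)$ is attained only by $E_N$, unless $E_{N}(V,k)=E_{N-1}(V,k)=a_I$ at some $k$, which would make the eigenvalue degenerate. Thus membership in $\calD_I$ is equivalent to: $a_I(V)$ is a simple eigenvalue of $H_V(k)$ for every $k$ with $E_N(V,k)=a_I(V)$, and likewise for $b_I$ with $E_{N+1}$. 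This also justifies the remark that \eqref{eq:unique-endpoint-bands} implies simplicity.

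Openness of $\calD_I$ then follows exactly as in Lemma~\ref{lem:simple-min-open}, applied to the maximum of $E_N$ and the minimum of $E_{N+1}$. Cover the compact attaining sets by finitely many neighborhoods carrying Riesz projections \eqref{eq:Riesz-projection} of rank one. Use \eqref{eq:minmax-Linfty} to show that the attaining sets of nearby $W$ stay in these neighborhoods. Simplicity then persists by analytic perturbation theory.

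Density is the main obstacle, and here I would invoke Klopp--Ralston~\cite{KloppRalston}. Their theorem states that for a generic (open dense) set of potentials, each gap edge is a simple eigenvalue at every attaining quasimomentum; their proof uses a perturbation argument with potentials in a neighborhood. The task is to localize their result to the relatively open set $\calV_I$. Given $V\in\calV_I$ and a neighborhood inside $\calV_I$, I would apply their perturbation result to obtain $W$ arbitrarily close to $V$ at whose edges of this specific gap the extremal eigenvalue is simple. If their statement is only for a dense set in a different topology or class of potentials, I would redo the mechanism locally. At a degenerate extremal point, perturb by $tq$ with $q$ chosen so that the first-order splitting, given by the compression of $q$ to the eigenspace, separates the eigenvalues. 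This lowers the multiplicity at the maximum, and iterating over the finitely many multiplicities gives the result. Combined with the compactness argument above, this yields simplicity at all attaining points, so $\calD_I$ is dense in $\calV_I$.
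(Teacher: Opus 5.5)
Your proposal is correct and follows the same route as the paper: Lipschitz dependence of the bands (equivalently, the Hausdorff estimate on the spectrum) gives openness of $\calV_I$, and density of $\calD_I$ comes from Klopp--Ralston~\cite{KloppRalston}; your observation that, with $N$ locally constant, membership in $\calD_I$ is exactly simplicity at the maximizers of $E_N$ and the minimizers of $E_{N+1}$, so that openness of $\calD_I$ reduces to the argument of Lemma~\ref{lem:simple-min-open}, usefully spells out what the paper only asserts. Your conditional fallback of ``redoing the mechanism locally'' would not stand on its own, because the attaining set can be infinite and one needs a single $q$ that makes the new maximum of $E_N$ strictly exceed that of $E_{N-1}$, a global condition that splitting at individual points does not provide; it is not needed, however, once the citation is used as in the paper.
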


\begin{proof}
The min--max principle gives
\begin{equation}
\label{eq:spectral-Hausdorff}
  \dist_H(\spec H_V,\spec H_W)
  \leq\|V-W\|_{L^\infty}.
\end{equation}
Thus $\calV_I$ is open. Applying~\cite[Theorem~1.1]{KloppRalston} at the two endpoints yields the density of $\calD_I$ in $\calV_I$. The relative openness of $\calD_I$ in $\calV_I$ again follows from \eqref{eq:spectral-Hausdorff} and the existence of a uniform band gap near band extrema.  
\end{proof}

\begin{lemma}
\label{lem:gap-global}
Let $(a,b)$ be a connected component {of $\R\setminus\spec H_V$}.
\begin{enumerate}
  \item {If $E_n(V,k_0)=a$ for some $k_0$, then} $a=\max_{k\in\T_*^d}E_n(V,k).$
  \item {If $E_m(V,k_0)=b$ for some $k_0$, then} $b=\min_{k\in\T_*^d}E_m(V,k).$
\end{enumerate}
\end{lemma}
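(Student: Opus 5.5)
The plan is to argue by contradiction, using only the continuity and connectedness of the band functions together with the Floquet--Bloch description \eqref{eq:floquet-spectrum} of the spectrum. For (1), suppose $E_n(V,k_0)=a$ but $\max_{k}E_n(V,k)>a$; then some $k_1\in\T_*^d$ has $E_n(V,k_1)>a$. We will show that the band $E_n(V,\T_*^d)$ then meets the gap $(a,b)$, which is impossible.

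The steps, in order, are as follows. First, $k\mapsto E_n(V,k)$ is continuous on $\T_*^d$. This follows from the min--max principle: $H_V(k)-H_V(k')$ is relatively bounded with small relative bound as $k'\to k$, and ordered eigenvalues are Lipschitz on compact sets of $k$; equivalently, one can use the continuity statements of Section~\ref{sec:local}. Second, $\T_*^d$ is connected, so the image $E_n(V,\T_*^d)$ is a connected set, hence an interval. It contains $a=E_n(V,k_0)$ and $E_n(V,k_1)>a$, so it contains $[a,\min(E_n(V,k_1),b))$. Third, this interval contains points of $(a,b)$. By \eqref{eq:floquet-spectrum} those points lie in $\spec H_V$, contradicting the fact that $(a,b)$ is a component of $\R\setminus\spec H_V$. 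Hence $E_n(V,k)\le a$ for all $k$, and since the value $a$ is attained at $k_0$, we get $a=\max_k E_n(V,k)$.

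Part (2) is symmetric. If $E_m(V,k_0)=b$ and some $k_1$ has $E_m(V,k_1)<b$, then by connectedness the image of $E_m(V,\cdot)$ contains $(\max(E_m(V,k_1),a),b]$, which meets $(a,b)$. This is the same contradiction. Therefore $b=\min_k E_m(V,k)$.

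The only step needing any care is the continuity of $E_n(V,\cdot)$ on the whole torus, including at crossings where the local analytic description of Section~\ref{sec:local} does not apply. The min--max characterization handles this directly, because each ordered eigenvalue is a continuous function of $k$ even where eigenvalues collide. I expect no real obstacle beyond this point.
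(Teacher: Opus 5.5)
Your proof is correct. The paper omits this proof as trivial, and your argument is the standard one it implicitly relies on: $E_n(V,\cdot)$ is continuous on the connected torus $\T_*^d$, so each band $E_n(V,\T_*^d)$ is an interval contained in $\spec H_V$ by \eqref{eq:floquet-spectrum}, and such an interval cannot reach into the gap $(a,b)$.
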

We omit the proof as it is trivial. For $V\in\calV_I$, let $\calN_I$ be the set of potentials such that (1) $ V \in \calD_I$; (2) for every $k$ attaining $a_I(V)$, $d_k^2E_{n_-(V)}(V,k)<0;$ (3) for every $k$ attaining $b_I(V)$, $d_k^2E_{n_+(V)}(V,k)>0.$

\begin{proposition}
\label{prop:gap-open-dense}
The set $\calN_I$ is open and dense relative to $\calV_I$.
\end{proposition}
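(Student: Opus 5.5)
We will prove openness and density of $\calN_I$ relative to $\calV_I$ by reducing to the single-band results of Section~\ref{sec:generic-band}: Proposition~\ref{prop:fixed-band-nondegenerate} for minima, and its analogue for maxima (obtained by the same arguments with signs reversed, as announced in the introduction). Lemma~\ref{lem:KR} supplies simplicity at the endpoints, and Lemma~\ref{lem:gap-global} identifies the endpoints as global extrema of single bands.

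\emph{Openness.} Fix $V\in\calN_I$ and write $n_\pm=n_\pm(V)$. By Lemma~\ref{lem:KR}, some neighborhood of $V$ lies in $\calD_I$.
\begin{itemize}
\item \emph{Band indices are locally constant.} Near $V$, the indices $n_\pm(W)$ do not change. The reason is that by \eqref{eq:spectral-Hausdorff} the gap persists, and by \eqref{eq:minmax-Linfty} each band $E_m(W,\cdot)$ moves by at most $\|W-V\|_{L^\infty}$. Any band touching $b_I(W)$ for $W$ close to $V$ must therefore have had $\min E_m(V,\cdot)$ close to $b_I(V)$. Since only finitely many bands meet a bounded energy window, and by uniqueness only $n_+$ attains $b_I(V)$, the other bands stay a positive distance away.
\item \emph{Simplicity and nondegeneracy persist.} By Lemma~\ref{lem:gap-global}, $b_I(W)=\min E_{n_+}(W,\cdot)$. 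Condition (3) says $V\in\mathcal N_{n_+}^{\min}$, which is open by Proposition~\ref{prop:fixed-band-nondegenerate}. Hence (3) persists, and (2) persists by the analogous argument for maxima.
\end{itemize}

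\emph{Density.} Take a nonempty open $\mathcal U\subset\calV_I$. By Lemma~\ref{lem:KR}, shrink $\mathcal U$ so that $\mathcal U\subset\calD_I$ and the indices $n_\pm$ are constant on it, by the previous step. Then every $W\in\mathcal U$ lies in $\mathcal S_{n_+}^{\min}$, since all global minima of $E_{n_+}(W,\cdot)$ equal $b_I(W)$ and are simple. Likewise $W$ lies in the maximal analogue $\mathcal S_{n_-}^{\max}$.
\begin{itemize}
\item By Proposition~\ref{prop:fixed-band-nondegenerate}, $\mathcal N_{n_+}^{\min}\cap\mathcal U$ is open and dense in $\mathcal U$.
\item By the maximum version, $\mathcal N_{n_-}^{\max}\cap\mathcal U$ is open and dense in $\mathcal U$.
\item The intersection of these two sets is nonempty and contained in $\calN_I$.
\end{itemize}

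\emph{Main obstacle.} The delicate point is the local constancy of $n_\pm(W)$ together with the identification of the endpoints with global extrema of one fixed band for all nearby $W$. This requires a uniform band-gap statement: there is $\delta>0$ such that $E_m(V,\cdot)$ stays at least $\delta$ away from $b_I(V)$ for every $m\ne n_+$ with $\min E_m\ge b_I(V)$. We will establish this from the finiteness of the set of bands meeting a compact energy interval, which follows from Weyl asymptotics uniform in $k$, and from the uniqueness clause in $\calD_I$. Once this is in place, the rest is bookkeeping with the single-band propositions.
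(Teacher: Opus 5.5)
Your proposal is correct and follows the paper's proof essentially step for step. For openness you use Lemma~\ref{lem:KR}, Lemma~\ref{lem:gap-global} and the openness of $\mathcal N_{n_\pm}$; for density you pass to an open subset of $\calD_I$ with fixed indices $n_\pm$, which lies inside $\mathcal S_{n_-}^{\max}\cap\mathcal S_{n_+}^{\min}$, and then intersect the open dense sets $\mathcal N_{n_+}^{\min}$ and $\mathcal N_{n_-}^{\max}$. The one place you go further is the local constancy of $n_\pm(W)$, which the paper only asserts with ``after shrinking''; your argument for it, via \eqref{eq:minmax-Linfty} and the finiteness of bands meeting a bounded energy window, is sound.
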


\begin{proof}
Fix $V\in\calN_I$. Lemma~\ref{lem:gap-global} identifies the lower endpoint as a global maximum of $E_{n_-(V)}$ and the upper endpoint as a global minimum of $E_{n_+(V)}$. The openness of~\eqref{eq:Nnmin} together with Lemma~\ref{lem:KR} shows that $\calN_I$ is relatively open.

Now we show density. Let $\mathcal U\subset\calV_I$ be nonempty and relatively open. By Lemma~\ref{lem:KR}, choose $V_0\in\mathcal U\cap\calD_I$. After shrinking, there exists a nonempty open set
\begin{equation}
    \label{eq:U0}
      \mathcal U_0\subset\mathcal U\cap\calD_I
\end{equation}
and fixed indices $n_-,n_+$ such that~\eqref{eq:unique-endpoint-bands} holds with these indices for every $V\in\mathcal U_0$. Lemma~\ref{lem:gap-global} and the definition \eqref{eq:Snmin} give
\[
  {\mathcal U_0\subset\mathcal S_{n_-}^{\max}\cap\mathcal S_{n_+}^{\min},}
\]
where $\mathcal S_{n_-}^{\max}$ is defined analogously for band maxima.  By the density of $\mathcal N_{n_+}^{\min}$, there exists a nonempty open set
\begin{equation}
    \label{eq:U1}
    \mathcal U_1\subset\mathcal U_0\cap\mathcal N_{n_+}^{\min}.
\end{equation}
By the density of $\mathcal N_{n_-}^{\max}$, there exists a nonempty open set
\begin{equation}
    \label{eq:U2}
    \mathcal U_2\subset\mathcal U_1\cap\mathcal N_{n_-}^{\max},
\end{equation}
where $\mathcal N_{n_-}^{\max}$ is defined analogously for band maxima. Thus, by \eqref{eq:U0}, \eqref{eq:U1} and \eqref{eq:U2}, we have a nonempty open set $\mathcal U_2\subset\calN_I$. This proves the density of $\calN_I$ in $\calV_I$.
\end{proof}

\begin{proof}[Proof of Theorem \ref{thm:main}]
Define
\begin{equation}
\label{eq:OI}
  \calO_I
  =\calN_I\cup\bigl(\calX\setminus\overline{\calV_I}\bigr).
\end{equation}
The set $\calO_I$ is open. It is dense as follows. Let $\mathcal W\subset\calX$ be nonempty and open. If $\mathcal W$ meets $\calX\setminus\overline{\calV_I}$, then it meets $\calO_I$. Otherwise $\mathcal W\subset\overline{\calV_I}$. Since $\calV_I$ is dense in its closure, $\mathcal W\cap\calV_I$ is nonempty; Proposition~\ref{prop:gap-open-dense} then gives $\mathcal W\cap\calN_I\neq\varnothing$.

Set
\begin{equation}
\label{eq:final-residual-set}
  \calG
  =\bigcap_{\substack{r,s\in\mathbb Q\\r<s}}\calO_{[r,s]}.
\end{equation}
By definition, $\calG$ is residual. Take $V\in\calG$. For any bounded spectral gap $(a,b)$ of $\spec H_V$. Choose $r,s\in\mathbb Q$ with
\[
  a<r<s<b.
\]
Then $V\in\calV_{[r,s]}$, and~\eqref{eq:OI}--\eqref{eq:final-residual-set} imply $V\in\calN_{[r,s]}$. This proves Parts~\textup{(i)} and~\textup{(iii)} of Theorem~\ref{thm:main}. A nondegenerate critical point is isolated, and the set of attaining points is closed. Therefore, there are only finitely many of them as $\T^d_*$ is compact. This proves part~\textup{(ii)} of Theorem~\ref{thm:main}.   
\end{proof}

\end{document}